\documentclass[11pt,a4paper]{article}
\usepackage[DIV=11]{typearea}
\usepackage[small,bf]{titlesec}
\usepackage[english]{babel}
\usepackage[utf8x]{inputenc}
\usepackage{hyperref}
\usepackage{amsmath}
\usepackage{amssymb}
\usepackage{amsfonts}
\usepackage{amsthm}
\usepackage{amscd}
\usepackage{mathdots}
\usepackage[noblocks]{authblk}
\usepackage{verbatim}
\makeatletter
\def\revddots{\mathinner{\mkern1mu\raise\p@
\vbox{\kern7\p@\hbox{.}}\mkern2mu
\raise4\p@\hbox{.}\mkern1mu\raise7\p@\hbox{.}\mkern1mu}}
\makeatother

\theoremstyle{plain}
\newtheorem{thm}{Theorem}[section]
\newtheorem{lem}[thm]{Lemma}
\newtheorem{prop}[thm]{Proposition}
\newtheorem{cor}[thm]{Corollary}

\theoremstyle{definition}

\theoremstyle{remark}
\newtheorem*{rem}{Remark}

\newcommand{\R}{\mathbb{R}}
\newcommand{\Q}{\mathbb{Q}}

\newcommand{\Ind}{\mathrm{Ind}}
\newcommand{\Hom}{\mathrm{Hom}}

\newcommand{\SL}{\mathrm{SL}}
\newcommand{\Spin}{\mathrm{Spin}}
\newcommand{\Sp}{\mathrm{Sp}}

\newcommand{\SU}{\mathrm{SU}}
\newcommand{\der}{{\mathrm{der}}}

\newcommand{\Aut}{\operatornamewithlimits{Aut}}


\title{Eisenstein series arising from Jordan algebras\thanks{MSC:11F70,22E55,22E50 \quad Keywords: Eisenstein series, Jordan algebras, Fourier-Jacobi functor}} 
\author{Marcela Hanzer\thanks{hanmar@math.hr}}
\affil{Marcela Hanzer, Department of Mathematics, University of Zagreb, Croatia}
\author{Gordan Savin\thanks{savin@math.utah.edu} }
\affil{Gordan Savin, Department of Mathematics, University of Utah,
  Salt Lake City} 
\date{}

\begin{document}

\maketitle

\begin{abstract} We describe poles and the corresponding residual automorphic representations of Eisenstein series attached to maximal 
parabolic subgroups whose unipotent radicals admit Jordan algebra structure. 
\end{abstract}

\section{Introduction}  
Let $G$ be a simple, simply connected algebraic group defined over a global field $k$. Assume that $G$ has a maximal parabolic subgroup $P=MN$ such that $N$ is abelian and  $P$ is conjugated to 
the opposite parabolic $\bar P= M \bar N$. Then $N$ admits  structure of a Jordan algebra $(J,\circ)$. 
The main goal of this article is to study poles and residues of the 
degenerate Eisenstein series  $E(s)$ attached to the parabolic $P$ under an additional assumption that the algebra identity element $e\in J$ can be written as a sum 
$e=e_1 + \cdots + e_r$, for a system of 
perpendicular and absolutely indecomposable idempotent elements $e_i$. 
This assumption allows us to use the technique of Fourier-Jacobi series, due to Ikeda \cite{Ikeda_FJ}, and build an argument inductive on $r$.  Examples of such 
Jordan algebras are $J_r(D)$, the algebras of $r\times r$ hermitian symmetric matrices with coefficients in a composition algebra $D$ over $k$.
In addition,  for $r=2$, there is a class of Jordan algebras $J_2(D)$ parameterized by quadratic spaces $D$ over $k$.  Let $d$ denote the dimension of $D$. 
In order to understand the structure of residual 
automorphic representations, it is necessary to understand the structure of local degenerate principal series representations $I(s)$ attached to $P$ at reducibility points.  
For real groups, in the setting of this paper, this was accomplished by Sahi in a couple of papers, \cite{Sahi_Tube} and \cite{Sahi}. On the other hand, for $p$-adic groups,  
Weissman  \cite{Weissman_small}  analyzes the structure of 
the degenerate principal series representations using a Fourier-Jacobi functor. In a nutshell, this method is a local analogue of Ikeda's method. 

More precisely, the contents of this paper are as follows. In Section 2 we describe the groups and related Jordan algebras. Section 3-5 are devoted to local results. 
 Weissman looks only at the case of 
split, simply laced groups, so in Section 3 we generalize his results to non-split groups. In Section 5 we summarize the results of Sahi in the real case. In order to keep 
the exposition simple,  we assume here that $D$ is either split or totally anisotropic and $d \equiv 0 \pmod{4}$. 
Section 6 is devoted to global results. 
The local Fourier-Jacobi functor works well with Ikeda's method and we combine the two to obtain sharper results. Our final result is a complete description of poles 
and the corresponding residual representations in a natural family of cases, that does not exhaust all possible cases that can be addressed by the methods of this paper. 
If we assume that $d \equiv 0 \pmod{4}$, and some additional minor assumptions that are automatically satisfied if $r>2$, then the Eisenstein series $E(s)$ has simple poles 
at the sequence of odd integers $1, 1+ d/2, \ldots , 1+ (r-1)d/2$, 
and the residual representation is isomorphic to the co-socle of the global induced representation $I(s)$ at the same points. 

Study of Eisenstein series attached to degenerate principal series has a long history, often intertwined with the classical theta correspondences and the Siegel-Weil formula. 
In particular, Ikeda's work deals with symplectic and unitary groups which, in the language of this paper, are the cases $J=J_r(D)$ where $D=k$ or $K$, a quadratic extension of $k$. 
Yamana \cite{Yamana_Siegel_Weil} has taken Ikeda's work further, to quaternionic groups. This works goes beyond the confines of classical groups and is motivated by 
a Siegel-Weil formula in the setting of exceptional theta correspondences, a work in progress of the second author with Wee Teck Gan.

\section{Groups} \label{S:groups} 
Following \cite{Kobayashi_Savin_small},  we shall describe the groups $G$ and Jordan algebras appearing in this paper, starting with split groups. The general case is obtained 
by Galois descent. 
 
 \subsection{Split groups and Jordan algebras} 
 So assume that $G$ is split i.e. it is a Chevalley group.  Let $\mathfrak g$ be the Lie algebra of $G$, and $\Phi$ the root system
arising from a maximal split Cartan subalgebra $\mathfrak t\subseteq \mathfrak g$. In particular, for every $\alpha\in\Phi$, we have the corresponding root space
$\mathfrak g_{\alpha}\subseteq \mathfrak g$. 
 Fix $\Delta=\{\alpha_{1}, \ldots ,\alpha_{l}\}$, a set of simple roots. Now every root can be written as
a sum $\alpha= \sum_{i=0}^l m_i(\alpha)\alpha_i$ for some integers
$m_i(\alpha)$. Every simple root  $\alpha_{j}$ defines a maximal parabolic subalgebra 
$\mathfrak p = \mathfrak p_j =\mathfrak m \oplus \mathfrak n$  where the nilpotent radical $\mathfrak n$ is the direct sum of $\mathfrak g_{\alpha}$ such that $m_{j}(\alpha)>0$. 
Let $\beta$ be the highest root. The algebra
 $\mathfrak n$ is commutative if and only if $m_{j}(\beta)=1$. 
  In the following table we list of all possible pairs 
$(\mathfrak g, \mathfrak m)$ with $\mathfrak n$ commutative and $\mathfrak p$ conjugate to the opposite parabolic by an element in $G$. 
\[
 \begin{array}{c||c|c|c|c|c|c}
 \mathfrak g  & C_n &A_{2n-1} &  D_{2n}  & E_{7} & B_{n+1} & D_{n+1}  \\ 
 \hline
   \mathfrak m^{\der} &A_{n-1} &  A_{n-1}\times A_{n-1} &   A_{2n-1} & E_{6} & B_n &  D_{n}  \\
   \hline 
   \dim \mathfrak n  & n(n+1)/2 &  n^2 &  n(2n-1) & 27 & 2n+1 & 2n  \\
   \hline 
   r & n & n & n & 3 & 2 & 2 \\
   \hline 
   d & 1 & 2 & 4 & 8 & 2n-1 & 2n-2  \\
  \end{array}  
 \] 
 
 The integers $r$ and $d$ are invariants of the (split) Jordan algebra structure $(J, \circ)$ on $\mathfrak n$ which we now describe. 
 The integer $r$ is the cardinality of any maximal set $S=\{ \beta_1, \ldots ,  \beta_r\}$ of strongly orthogonal roots $\alpha$ such that 
  $\mathfrak g_{\alpha} \subseteq \mathfrak n$.  Observe that $r=1$ if and only if $G=\SL_2$. 
   For every $\beta_i\in S$ 
take an $\mathfrak {sl}_2$-triple $(f_i, h_i, e_i)$  where $e_i\in \mathfrak g_{\beta_i}$ and  $f_i\in \mathfrak g_{-\beta_i}$. Let 
\[ 
f=\sum_{i=1}^r f_i, \, h=\sum_{i=1}^r h_i \text{ and }  e=\sum_{i=1}^r e_i. 
\] 
Since the roots $\beta_i$ are strongly orthogonal, $(f,h,e)$ is also an $\mathfrak {sl}_2$-triple. 
The semi-simple element $h$ preserves the decomposition 
\[ 
\mathfrak g = \bar{\mathfrak n}\oplus \mathfrak m \oplus \mathfrak n. 
\] 
More precisely,  $[h,x]=-2 x$ for all $x\in \bar{\mathfrak n}$, $[h,x]=0$ for all $x\in {\mathfrak m}$, and 
$[h,x]=2 x$ for all $x\in \mathfrak n$. The triple $(f, h, e)$  lifts to a homomorphism $ \varphi : \SL_2 \rightarrow G$.  
The element  
\begin{equation} \label{E:special_element}
w_0=
 \varphi \left(
\begin{matrix} 0 & 1 \\ 
-1&  0 \end{matrix}
\right) 
\end{equation} 
normalizes $M$ and conjugates $\mathfrak n$ into $\bar{\mathfrak n}$, and vice versa. 
The Jordan algebra  multiplication $\circ$ on $J=\mathfrak n$ is defined by 
\[
x\circ y = \frac{1}{2}[ x, [f,y]]. 
\]
Note that $e$ is the identity element. 
The elements $e_i$ are mutually perpendicular ($e_i \circ e_j =0$ if $i\neq j$) and idempotent ($e_i\circ e_i=e_i$) elements in $J$ such that 
$e_1+ \cdots + e_r=e$. These idempotent elements give a Pierce decomposition of $J$, 
\[ 
J = \bigoplus_{1\leq i \leq r} J_{ii} \oplus \bigoplus_{1\leq i<j\leq r} J_{ij} 
\] 
where   
\[ 
J_{ii} = \{ x\in J ~|~ e_i \circ x = x\}
\] 
and 
\[ 
J_{ij} = \{ x\in J ~|~ e_i \circ x =\frac{1}{2}  x \text{ and } e_j \circ x= \frac{1}{2} x \}. 
\] 
The space $J_{ii}$ is one-dimensional and spanned by $e_i$. The dimension of $J_{ij}$, for $i<j$, is $d$. It is independent of $i<j$. Let $D=J_{12}$. 
Then $D$ is a quadratic space with a split quadratic form 
\[ 
q(x)= \frac{1}{2}\kappa([f_1, x], [f_2,x]]), 
\] 
where $\kappa(\cdot, \cdot )$ is the Killing form normalized by $\kappa(f_1, e_1 )=1$. If $r>2$, then one can identify all $J_{ij}$ with $D$ and, using 
$J_{12}\circ J_{23} \subset J_{23}$,  
endow $D$ with a multiplication such that $q(xy)=q(x)q(y)$, for all $x,y\in D$, i.e. $D$ is a composition algebra. 

\smallskip 

 Each triple $(f_i, h_i, e_i)$  lifts to a homomorphism of algebraic groups 
$\varphi_i : \SL_2 \rightarrow G$. 
By restricting $\varphi_i$ to the torus of diagonal matrices in $\SL_2$ we obtain a homomorphism 
(a co-character)  $\omega^{\vee}_i : \mathbb G_m \rightarrow M$, 
\begin{equation}  \label{E:co-character} 
\omega^{\vee}_i(t) = \varphi_i \left(
\begin{matrix} t & 0 \\ 
0& t^{-1} \end{matrix}
\right). 
\end{equation} 
Let $T_r\subseteq M$ be the torus generated by all  $\omega_i^{\vee}(t)$. 
Any element in $T_r(k)$ is uniquely written as a product of $\omega_i^{\vee}(t_i)$ for some $t_i\in k^{\times}$. 
One checks that the restricted root system with respect to $T_r$ is of the type $C_r$.
Since $G$ is simply connected, the group of characters $\Hom(M, \mathbb G_m)\cong \mathbb Z$ has a canonical generator $\omega$ which, 
when restricted to the torus $T$,  is the fundamental weight corresponding to the simple root $\alpha_j$, defining $M$. 
Moreover, the kernel of $\omega$ is $M^{\der}$, the derived group of $M$. A simple computation with the root data shows that 
\begin{equation} \label{E:co-root}
\omega(\omega^{\vee}_i(t))=t. 
\end{equation}

 \subsection{Fourier-Jacobi tower} 
 Let $Q=LV$ be the standard parabolic subgroup of $G$ such that the roots of $L^{\der}$ are perpendicular to the highest root $\beta$. In other words, the Lie algebra of 
 $L^{\der}$ is the centralizer in $\mathfrak g$ of the $\mathfrak{sl}_2$-triple corresponding to $\beta$. 
 The unipotent radical $V$ is a Heisenberg group with the center $Z=\exp(\mathfrak g_{\beta})$.  The group $L^{\der}$ is semi-simple and, by inspection,  it has a 
 unique simple factor $G_1$ that is not contained in $M$. 
 Let $M_1= G_1\cap M$ and $N_1=G_1\cap N$. Then $P_1=M_1N_1$ is maximal parabolic subgroup of $G_1$.  
 The unipotent radical $N_1$ has a Jordan algebra $J_1$ structure that is easily related to $J$. Indeed, if we pick the set $S=\{ \beta_1, \ldots ,  \beta_r\}$ of 
 strongly orthogonal roots such that 
 $\beta_1=\beta$, then $J_1$ is the sum of the pieces in the Pierce decomposition where all the indexes are greater than 1. 
 This process can be continued, and will give a sequence of simple groups $G, G_1, \ldots G_{r-1}\cong \SL_2$ 
 and maximal parabolic groups  with unipotent radicals $N, N_1, \ldots N_{r-1}\cong k$. Moreover, this process gives us a canonical choice of $S$ such that $\beta_i$ 
 is the the highest root of $G_{i-1}$.
 This sequence is the Fourier-Jacobi tower referred to in the 
 title, and the last group $\SL_2$ is the terminal group. This process is summarized by the following table: 
 
 \[
 \begin{array}{c||c|c|c|c|c|c}
 \mathfrak g  & C_n &A_{2n-1} &  D_{2n}  & E_{7} & B_{n+1} & D_{n+1}  \\ 
 \hline
   \mathfrak m^{\der} &A_{n-1} &  A_{n-1}\times A_{n-1} &   A_{2n-1} & E_{6} & B_n &  D_{n}  \\
   \hline 
    \mathfrak l^{\der}  & C_{n-1}  &  A_{2n-3} &  A_1 \times D_{2n-2}  & D_6 &  A_1 \times B_{n-1} &  A_1 \times D_{n-1} \\
   \hline 
   \mathfrak g_1  & C_{n-1} & A_{2n-3} & A_1 & D_6  & A_1 & A_1 \\
   \hline 
   \mathfrak m_1^{\der}  & A_{n-2}  & A_{n-2}\times A_{n-2} & 0  & D_5 & 0 & 0 \\
  \end{array}  
 \] 
 We shall need the following remark. 
Let $\varphi : \SL_2 \rightarrow G$ arising from this $S$. Then $w_0$, defined by the equation (\ref{E:special_element}), permutes the simple roots of $M$.

 \subsection{Non-split groups} 
In \cite{Kobayashi_Savin_small} it is proved that the centralizer of $\varphi(\SL_2)$ in $\Aut(G)$ is precisely $\Aut(J)$. 
Therefore, by functoriality of Galois cohomology,  a class $c\in H^1(k, \Aut(J))$ defines a class 
in $H^1(k, \Aut(G))$. Hence the class $c$ defines a Jordan algebra $J_c$, a form of $J$, and a form $G_c$ of $G$ whose Lie algebra contains the triple $(f, h, e)$. 
Hence $G_c$ contains a form $P_c$ of $P$ whose unipotent radical is isomorphic to $J_c$. Moreover, if the form $J_c$  arises from a form of $D$, 
i.e. $J_c$ contains the absolutely indecomposable idempotents $e_i$,  then the Lie algebra 
of $G_c$ contains the triples $(f_i, h_i, e_i)$, and $G_c$ contains the split 
torus $T_r$, defined above. This torus is maximal if $D$ is anisotropic. 

Henceforth we shall omit the subscript $c$, and $G$ will denote a group arising from a Jordan algebra $J\cong J_r(D)$ where $D$ is a composition algebra if $r\geq 3$, 
and simply a quadratic space if $r=2$. In particular, $G$ contains the maximal parabolic subgroup $P=MN$ such that $N\cong J$, and the Heisenberg parabolic 
subgroup $Q=LV$, whose center is $Z\cong J_{11}$, and $G$ is the first term of a Fourier-Jacobi tower where the next group is $G_1$ with the the maximal parabolic subgroup 
$P_1=M_1N_1$ such that $N_1\cong J_1$ where $J_1\cong J_{r-1}(D)$ etc.

\section{Representations of $p$-adic groups} 

In this section $k$ is a $p$-adic field. The goal of this section is to extend the results of Weissman in \cite{Weissman_small} to non-split groups. 

\subsection{Fourier-Jacobi functor} 
We fix a non-trivial additive character $\psi$ of $Z\cong k$, the center of the unipotent radical $V$ of the parabolic $Q=LV$. 
 Let $\omega_{\psi}$ be the corresponding irreducible representation of $V$ with the central character 
   $\psi$. Note that $L^{\der}=[L,L]$ (or its 2-fold cover) acts on $\omega_{\psi}$, via the Weil representation. 
    Let $\pi$ be a smooth representation of $G$ and $\pi_{Z,\psi}$ the maximal quotient of $\pi$ on which $Z$ acts as $\psi$.  Then $\pi_{Z,\psi}$ is a multiple of $\omega_{\psi}$, and 
    \[ 
   FJ(\pi)  = \Hom_V(\omega_{\psi}, \pi_{Z,\psi}) 
    \] 
    is naturally a $L^{\der}$-module. 
   The Fourier-Jacobi functor $\pi\mapsto FJ(\pi)$ is exact \cite{Weissman_small}. 
   
   \smallskip

   Let $\omega$ be the  character of $M$, introduced previously, giving the isomorphism of $M/M^{\der} \cong \mathbb G_m$. 
    Let $\chi$ be a quadratic character and $|\cdot |^s$ the absolute value character, taken to the power $s\in \mathbb C$,  of  $k^{\times}=\mathbb G_m(k)$. 
   We can pull back these two characters to $M$ via $\omega$.  Let $I(\chi, s)=\Ind_P^G(\chi \otimes |\cdot |^s)$ be the degenerate principal series representation of $G$. 
   The modular character $\rho_P$ can be expressed in terms of $\omega$ using the relation  $\omega(\omega_1^{\vee}(t))=t$. The conjugation action of 
   $\omega_1^{\vee}$ on $N\cong J_r(D)$ is given by multiplication by $t^2$ on $J_{11}\cong k$, and by $t$ on each $J_{1,i}\cong D$, for $1< i \leq r$. Thus 
   $\rho_P(m) = |\omega(m)|^{2+ (r-1)d}$, where $d=\dim D$. The trivial representation is a quotient of $I(1, 1+ (r-1)\frac{d}{2})$. 
   
   \smallskip

   We shall now compute the action of the Fourier-Jacobi functor on $I(\chi,s)$. 
   In order to state the result we need some additional data arising from the Weil representation 
  $\omega^D_{\psi}$  of $\SL_2(k)$ on $C_0^{\infty}(D)$. For every $t\in k^{\times}$
   let $h(t)$ be the element in the universal central extension of $\SL_2(k)$, introduced by Steinberg, 
   projecting to $\left(\begin{smallmatrix} t & 0 \\ 0& t^{-1} \end{smallmatrix}\right)$.  In particular,
   $h(t) h(s)= h(ts) (t,s)$, where $(t,s)$ is the Steinberg symbol.  Then, for every $f\in C_0^{\infty}(D)$, 
   \[ 
   \omega^D_{\psi}(h(t))(f)(x) = \chi_D(t) |t|^{d/2} f(tx) 
   \] 
   where $\chi_D(t)$ is a fourth root of 1. If $\dim D$ is odd, then $\chi_D(t) \chi_D(s)=\chi_D(ts) (t,s)_2$ where $(t,s)_2$ is the Hilbert symbol. If $\dim D=2n$, then 
   $\chi_D$ is independent of $\psi$. It is a quadratic character of $k^{\times}$ that corresponds to the quadratic algebra 
   $K=k(\sqrt{\Delta})$, by the local class field theory, where $d$ is the discriminant of the quadratic form $q$ on $D$.
    More precisely, if $q=a_1 x_1^2 + \ldots + a_{2n} x_{2n}^2$, then $\Delta= (-1)^n a_1 \cdot \ldots \cdot a_{2n}$. If 
    $(D,q)$ is a direct sum of $n$-hyperbolic planes, or if the anisotropic kernel of $(D,q)$ is a quaternion algebra, then $\chi_D$ is trivial.
    However, if the anisotropic kernel of $(D,q)$ is a quadratic field $K$, then $\chi_D$ corresponds to $K$ by the local class field theory. 
    
    \begin{thm}  
    \label{FJ_local_degenerate_principal}
    Let $I(\chi,s)$ be the degenerate principal series corresponding to the pair $(G,P)$ where the unipotent radical of $P$ is isomorphic to 
    the Jordan algebra $J_r(D)$. Let $(G_1, P_1)$ be the next pair in the Fourier-Jacobi tower. Then 
    \[ 
    FJ(I(\chi,s))= I_1(\chi\chi_D,s). 
    \] 
    \end{thm}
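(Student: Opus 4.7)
\emph{Proof plan.} The strategy is to mimic Weissman's argument from \cite{Weissman_small} and extend the bookkeeping to a possibly non-split composition algebra $D$. Since the Fourier-Jacobi functor factors as the $(Z,\psi)$-twisted Jacquet functor followed by $\Hom_V(\omega_\psi,-)$, both of which are exact, it suffices to compute the two steps in turn on $I(\chi,s)=\Ind_P^G(\chi\otimes|\cdot|^s)$ and track the induced character.

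First I would analyze $I(\chi,s)|_Q$ via the geometric lemma, filtering it by $P$-$Q$ double cosets in $G$. Because $Z=\exp\mathfrak g_\beta\subset N$, on every double coset $PwQ$ for which $wZw^{-1}\subset P$ the inducing character is trivial on $Z$, so such strata are annihilated by $(-)_{Z,\psi}$. The only surviving contribution comes from the ``open'' stratum represented by the element $w_0$ of \eqref{E:special_element}, which conjugates $\mathfrak n$ into $\bar{\mathfrak n}$ and hence moves $Z$ off $P$. On this stratum the subquotient is an induction from $Q\cap w_0^{-1}Pw_0=Q\cap\bar P$; the resulting $V$-module is of $\omega_\psi$-type, and $\Hom_V(\omega_\psi,-)$ then collapses $V$ while identifying the polarization coordinates of $V/Z$ with the Weil representation $\omega_\psi^D$ on a copy of $D$.

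What remains is a representation of $L^{\der}$ induced from an appropriate parabolic subgroup. By Section~\ref{S:groups}, $L^{\der}$ is identified with $G_1$ times a factor lying in $M$, the inducing subgroup meets $G_1$ in a conjugate of $P_1=M_1N_1$, and acts trivially along the other factor; after induction-by-stages this is $\Ind_{P_1}^{G_1}$ of some character on $M_1$.

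The real content of the theorem is to check that this character is $\chi\chi_D\otimes|\cdot|^s$. By \eqref{E:co-root} the pull-back of $\chi\otimes|\cdot|^s$ through the co-characters $\omega_i^\vee$ is intrinsically $\chi\otimes|\cdot|^s$; the action of $\omega_1^\vee(t)$ on $\omega_\psi$ via $h(t)$ on $\omega_\psi^D$ contributes the extra factor $\chi_D(t)|t|^{d/2}$. The $|t|^{d/2}$ is exactly the discrepancy between $\rho_P=|\omega|^{2+(r-1)d}$ and $\rho_{P_1}=|\omega_1|^{2+(r-2)d}$, so it is absorbed by induction-by-stages, leaving only the twist by $\chi_D$. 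The main obstacle — and the only point where the non-split case departs from Weissman's treatment — is verifying that the formula $\omega_\psi^D(h(t))f(x)=\chi_D(t)|t|^{d/2}f(tx)$ still applies verbatim: this requires the split torus $T_r$ and the triples $(f_i,h_i,e_i)$ to sit inside the non-split $G$, which is precisely what Section~\ref{S:groups} provides.
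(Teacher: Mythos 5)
Your proposal is correct and follows essentially the same route as the paper, whose proof consists of citing Weissman's argument for split simply-laced $G$ and asserting that it extends to the general case; your outline (Mackey/geometric-lemma filtration of $I(\chi,s)|_Q$, survival of only the open cell under the $(Z,\psi)$-twisted Jacquet functor, and the bookkeeping in which $|t|^{d/2}$ from $\omega_\psi^D(h(t))$ absorbs the discrepancy between $\rho_P^{1/2}$ and $\rho_{P_1}^{1/2}$, leaving the twist by $\chi_D$) is exactly that extension. No further comment is needed.
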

    \begin{proof} 
    This is proved by  Weissman   in \cite{Weissman_small} if $G$ is split and simply laced. 
    The proof given there extends easily to the more general class of groups considered here. 
    (Note that  $G$ is split and simply laced precisely when $D$, considered as a quadratic space, is isomorphic to a direct sum of hyperbolic planes). 
    \end{proof} 
    
    \subsection{Decomposition of degenerate principal series} 
    
    Let $\pi$ be an irreducible representation of $G$. If $\pi$ is not the trivial representation, then there exists a non-trivial character $\psi$ of $Z$ such that 
    $\pi_{Z,\psi}\neq 0$. The group $L$ acts on $Z$ by conjugation and, therefore, on non-trivial characters of $Z$. If this action is transitive then, without loss of generality, 
    we can fix a non-trivial character $\psi$ of $Z$ and then $\pi$ is non-trivial if and only if $FJ(\pi)\neq 0$. In this case 
    Theorem \ref{FJ_local_degenerate_principal} can be used to compute points of reducibilities of the degenerate principal series $I(\chi,s)$ using the induction on $r$.

    Since the Pontrjagin dual of $Z$ is  isomorphic to $Z$, over local fields, the group $L$ acts transitively on non-trivial characters
     if and only if acts transitively on non-trivial elements in $Z$. 
     This is true over the algebraic closure of $k$ and it holds over $k$ if $H^1(k, C)$ is trivial, where $C$ is the centralizer of 
    $e_1$ in $L$.  (We use $Z\cong J_{11}=k\cdot e_1$.) If $G$ is not of the absolute type $C_{2r}$ or $A_{2r-1}$, then 
    $C=L^{\der}$. Since $k$ is $p$-adic then the Galois cohomology of simply-connected groups is trivial, and the transitivity holds. It also holds for $G=\SL_{2r}$. 
     Hence it fails only  if $G=\Sp_{2r}$ or $\SU_{2r}$ i.e. $J=J_r(k)$ or $J_r(K)$, respectively, where $K$ is a quadratic field extension of $k$. 
      In these two cases $C/L^{\der}\cong \mu_2$ and $K^1$, respectively, where $K^1$ is the group 
     of elements of norm one in $K^{\times}$. Thus the orbits of non-trivial characters are parameterized by the classes of squares in 
     $k^{\times}$ and $k^{\times}/N_{K/k}(K^{\times})$, respectively. 
      
      Thus, in the following Theorem \ref{comp_series_r=2_p-adic} and Theorem \ref{comp_series_r>2_p-adic} we can use  Theorem \ref{FJ_local_degenerate_principal} to get the length of the degenerate principal series. 
     \begin{thm} 
     \label{comp_series_r=2_p-adic}
     Let $I(\chi,s)$ be the principal series of $G$ arising from the maximal parabolic subgroup $P$ whose radical $N$ is isomorphic to the 
     Jordan algebra $J_2(D)$ where $\dim D>2$. Assume that $\chi$ is a quadratic character and $s$ real. 
     \begin{enumerate} 
     \item $\dim D=2n-2$, and the discriminant of the quadratic form $q$ is trivial, i.e. $\chi_D=1$. If $\chi\neq 1$ then $I(\chi,s)$ is irreducible unless 
     $s=0$ and then it is a direct sum of two non-isomorphic irreducible representations. If $\chi=1$ then $I(\chi,s)$ is irreducible unless 
     $s=\pm 1, \pm n$ and then it has a non-split composition series of two non-isomorphic irreducible representations.
     \item $\dim D=2n-2$, and the discriminant is non-trivial, i.e. $\chi_D=\chi_K$ where $K$ is a quadratic field extension of $k$. 
      If $\chi\neq \chi_K$ then $I(\chi,s)$ is irreducible unless 
     $s=0$ and then it is a direct sum of two non-isomorphic irreducible representations, or $\chi=1$ and $s=\pm n$ where the trivial representation occurs. 
     If $\chi=\chi_K$ then $I(\chi,s)$ is irreducible unless 
     $s=\pm 1$ and then it has a non-split composition series of two non-isomorphic irreducible representations.
     \item $\dim D=2n-1$. Then $I(\chi,s)$ is irreducible unless 
     $s=\pm 1/2$ and then it has a non-split composition series of two non-isomorphic irreducible representations, or $\chi=1$ and $ s=\pm(n+1/2)$ where the trivial representation occurs.
     \end{enumerate} 
     \end{thm}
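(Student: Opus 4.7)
The plan is to reduce to the classical principal-series analysis on $\SL_2(k)$ (or its metaplectic double cover) via the Fourier-Jacobi functor of Theorem~\ref{FJ_local_degenerate_principal}, and to supplement this with a direct identification of the points at which the trivial representation enters $I(\chi,s)$.

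Since $r=2$, the terminal group of the Fourier-Jacobi tower is $G_1\cong\SL_2$, so Theorem~\ref{FJ_local_degenerate_principal} gives $FJ(I(\chi,s))=I_1(\chi\chi_D,s)$. In cases~(1) and (2), $\chi_D$ is a genuine quadratic character of $k^{\times}$ and $I_1(\chi\chi_D,s)$ is a principal series of $\SL_2(k)$; in case~(3), $\dim D$ is odd, $\chi_D$ is Weil-index-valued, and $I_1$ lives on the metaplectic double cover. The composition structure of these principal series is classical: for a quadratic character $\eta$ and real $s$ on linear $\SL_2$, $I_1(\eta,s)$ is irreducible except at $s=\pm 1$ when $\eta=1$ (a non-split extension of trivial and Steinberg) and at $s=0$ when $\eta\neq 1$ (a direct sum of two non-isomorphic unitary pieces); on metaplectic $\SL_2$, reducibility occurs at $s=\pm 1/2$ with a non-split extension. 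The four sub-cases of (1)--(2) (according as $\chi\chi_D$ is trivial or not) and the single case (3) recover the $\SL_2$-derived reducibility points listed in the theorem.

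Next, exactness of $FJ$ transfers composition series from $I_1$ back to $I(\chi,s)$, with loss only at the kernel. Since $G$ is simple and $Z$ contains a root subgroup, the normal closure of $Z$ is all of $G$; an irreducible representation with $Z$ acting trivially is therefore trivial, so only the trivial representation lies in $\ker FJ$. The trivial representation occurs in $I(\chi,s)$ exactly when $\chi=1$ and $s=\pm(1+d/2)$: by the formula $\rho_P(m)=|\omega(m)|^{2+d}$, the trivial representation is a quotient of $I(1,1+d/2)$, and dually a subrepresentation at $s=-(1+d/2)$. Substituting $d=2n-2$ and $d=2n-1$ recovers the extra reducibility points $\pm n$ (cases~(1)--(2)) and $\pm(n+1/2)$ (case~(3)), appearing only when $\chi=1$.

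Finally, to distinguish direct-sum decomposition from non-split extension: at $s=0$ with $\chi\chi_D$ a non-trivial quadratic character, unitary induction of a unitary character produces a unitary representation, so reducibility forces a splitting which lifts through $FJ$ to a direct-sum decomposition of $I(\chi,0)$ matching the one on $\SL_2$. At the $FJ$-derived points $s=\pm 1$ and $s=\pm 1/2$, non-splitness transfers from $I_1$: a splitting of $I(\chi,s)$ would, by additivity of the exact functor $FJ$, give a splitting of $I_1(\chi\chi_D,s)$, contradicting the $\SL_2$ analysis. The main obstacle I anticipate is the trivial-representation points $s=\pm(1+d/2)$, where $FJ(I(\chi,s))$ is irreducible and so does not detect the extension class; here one must give an independent argument — for instance, by computing the standard intertwining operator along the element $w_0$ of (\ref{E:special_element}) and verifying that the trivial representation sits strictly at one end of the composition series at each of the two points — in order to rule out a direct-sum decomposition of $I(1,1+d/2)\cong\text{triv}\oplus C$.
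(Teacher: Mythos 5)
Your overall strategy (reduce to $\SL_2$ or its double cover via the Fourier--Jacobi functor, and separately locate the trivial representation at $s=\pm(1+d/2)$) is exactly the paper's, and your treatment of the split-versus-non-split dichotomy at the $\SL_2$-derived points is fine. But there is a genuine gap at the key step: you claim that ``exactness of $FJ$ transfers composition series from $I_1$ back to $I(\chi,s)$.'' Exact functors push composition series \emph{forward}: a filtration of $I(\chi,s)$ yields one of $FJ(I(\chi,s))=I_1(\chi\chi_D,s)$, which is why irreducibility of $I_1$ (away from the trivial-representation points) forces irreducibility of $I$. The converse fails in general --- an exact functor can send an irreducible module to a module of length two --- so reducibility of $I_1(\chi\chi_D,s)$ at $s=0$, $\pm1$, $\pm1/2$ does not by itself yield reducibility of $I(\chi,s)$ there. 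Your later paragraphs on split versus non-split all presuppose that $I(\chi,s)$ is already known to be reducible at these points, so the lower bound on the set of reducibility points is never established. The paper closes exactly this gap with a unitarity argument: if $I(\chi,s)$ were irreducible at the candidate point, the Fourier--Jacobi computation would make it irreducible (hence, being unitarizable at $s=0$, a complementary series) on an interval reaching all the way to the point where the trivial representation enters, which is impossible since complementary series must terminate before that point. You need this, or some substitute (e.g.\ an explicit intertwining-operator or Jacquet-module computation), to force reducibility at $s=0$ for $\chi\chi_D\neq 1$ and at $s=\pm1$ (resp.\ $\pm1/2$) for $\chi\chi_D=1$.

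Two smaller points. First, your justification that $\ker FJ$ consists only of the trivial representation conflates ``$Z$ acts trivially on $\pi$'' with ``$\pi_{Z,\psi}=0$ for the \emph{fixed} character $\psi$''; one needs $L(k)$ to act transitively on the non-trivial characters of $Z(k)$, which the paper verifies in the preceding subsection (and which holds here precisely because $\dim D>2$ excludes the $\Sp_4$ and $\SU_4$ cases). Second, for non-splitness at the points $s=\pm n$ in case (1), where $FJ$ is blind, your proposed intertwining-operator computation would work, but a shorter route is that $I(1,n)$ with $n>0$ is a standard module with a unique irreducible (Langlands) quotient, hence indecomposable, and dually at $s=-n$.
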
 
     \begin{proof}  We shall use the Fourier-Jacobi functor, note that $G_1=\SL_2(k)$ (or its two-fold cover) in  all three cases, since $r=2$. Consider the first case. 
    The location of the trivial representation is at $\chi=1$ and $s=\pm n$, as previously noticed. 
      The Fourier-Jacobi functor is exact, takes $I(\chi,s)$ to $I_1(\chi,s)$ and kills only the trivial representation. Hence, if $(\chi,s) \neq (1,\pm n)$, then a non-trivial 
      composition series in $I(\chi,s)$ will give one for $I_1(\chi, s)$. The decomposition of the principal series of $\SL_2(k)$ is well known.
       It follows that $I(\chi,s)$ reduces (possibly) only for $\chi=1$ and $s=\pm 1$, or $\chi\neq 1$ and $s=0$.
       Irreducibility of $I(0,\chi)$ implies existence of the complementary series, 
       which must end before the points where the trivial representation is contained. This forces reducibility for $\chi\neq 1, s=0$ and $1, s=\pm 1$. 
       The parts (2) and (3) are proved similarly, for (3) one uses that the principal series of $\widetilde{\SL}_2(k)$ reduces at $s=\pm 1/2$. 
           
      \end{proof}

      \begin{thm} 
      \label{comp_series_r>2_p-adic}
     Let $I(\chi,s)$ be the principal series of $G$ arising from the maximal parabolic subgroup $P$ whose radical $N$ is isomorphic to the 
     Jordan algebra $J_r(D)$ such that $\chi_D=1$.  Let $d=\dim D$. Assume that $\chi$ is quadratic and $s$ real. If $\chi\neq 1$ then $I(\chi,s)$ is irreducible unless $s=0$, 
     and then it is a direct sum of two non-isomorphic representations. If $\chi=1$, then then $I(\chi,s)$ is irreducible unless 
     $s= \pm 1, \pm(1+d/2), \ldots, \pm(1+ (r-1) d/2)$ and then it has a non-split composition series of two non-isomorphic irreducible representations.
    \end{thm}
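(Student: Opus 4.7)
The plan is to run induction on $r$, using the Fourier-Jacobi functor to bootstrap from the $r=2$ case (Theorem \ref{comp_series_r=2_p-adic}(1), which is exactly the case $\chi_D=1$, $d=2n-2$, giving reducibility at $\pm 1, \pm n = \pm(1+(r-1)d/2)$ for $\chi=1$ and at $0$ for $\chi\neq 1$). The inductive data is the pair $(G_1,P_1)$ next in the Fourier-Jacobi tower, whose unipotent radical is $J_{r-1}(D)$ for the same $D$, so the hypothesis $\chi_D=1$ is preserved.

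First I would establish the upper bound on the set of reducibility points. Since $\chi_D=1$, Theorem \ref{FJ_local_degenerate_principal} gives $FJ(I(\chi,s))=I_1(\chi,s)$. By inductive hypothesis, $I_1(\chi,s)$ is irreducible away from the points
\[
(\chi\neq 1,\,s=0)\quad\text{or}\quad \bigl(\chi=1,\; s=\pm 1,\pm(1+d/2),\ldots,\pm(1+(r-2)d/2)\bigr).
\]
Because $FJ$ is exact and kills only the trivial representation of $G$, reducibility of $I(\chi,s)$ at an otherwise-irreducible FJ-image is forced to involve the trivial representation as a Jordan-H\"older constituent. Since the trivial representation occurs in $I(\chi,s)$ exactly at the unique pair $\chi=1$, $s=\pm(1+(r-1)d/2)$ (as noted just after Theorem \ref{FJ_local_degenerate_principal}, using the modular character computation), we obtain the claimed list of \emph{candidate} reducibility points.

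Next I would show each candidate is genuinely a reducibility point, and count the composition factors. At the points listed in the inductive hypothesis, $I_1(\chi,s)$ has a non-split length-$2$ composition series by induction; exactness of $FJ$ then forces $I(\chi,s)$ to be reducible, with $FJ$ identifying its non-trivial constituents with those of $I_1(\chi,s)$; hence $I(\chi,s)$ has exactly two non-isomorphic constituents (no trivial constituent can appear here since we are not at $s=\pm(1+(r-1)d/2)$). At the new endpoint $\chi=1$, $s=\pm(1+(r-1)d/2)$, reducibility is given by the presence of the trivial representation, and $FJ(I(1,s))=I_1(1,s)$ is irreducible by induction (since the shift $(r-1)d/2$ exceeds the largest reducibility point $1+(r-2)d/2$ of the smaller series); so $I(1,s)$ again has length $2$, with the trivial representation and one other constituent, and the two are clearly non-isomorphic. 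Finally, the standard complementary-series/intertwining-operator argument (exactly as in the last lines of the proof of Theorem \ref{comp_series_r=2_p-adic}) shows that the complementary series cannot extend past any candidate point, so reducibility really does occur at each of them.

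It remains to distinguish the direct-sum case from the non-split case. At $s=0$ with $\chi\neq 1$, the inducing character $\chi\otimes|\cdot|^0=\chi$ is unitary, so $I(\chi,0)$ is unitarily induced from a unitary character, hence completely reducible; combined with the length-$2$ count above, this forces the direct sum decomposition. At each of the points with $\chi=1$ and $s\neq 0$, the unique irreducible quotient (respectively the unique irreducible submodule for $s<0$) of $I(1,s)$ is the Langlands quotient/submodule, and cannot simultaneously be a submodule/quotient without forcing the two constituents to be isomorphic; combining this with the length-$2$ count produces the non-split composition series. The main obstacle in the argument is really the bookkeeping at the endpoint $s=\pm(1+(r-1)d/2)$: one must verify that the inductive hypothesis has nothing to say at this value of $s$ (because it lies strictly past the largest reducibility point for $I_1$), so that all reducibility there is accounted for by the trivial representation alone; once this is isolated, the rest of the argument is a direct transcription of the $r=2$ case through the Fourier-Jacobi functor.
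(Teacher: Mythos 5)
Your ``upper bound'' half is sound and is exactly how the paper argues: exactness of $FJ$ plus the fact that $FJ$ kills only the trivial representation gives $\mathrm{length}\, I(\chi,s)\le \mathrm{length}\, I_1(\chi,s) + (\text{multiplicity of the trivial rep})$, which confines reducibility to the listed candidates and bounds the length by two. The gap is in the converse direction. You write that ``exactness of $FJ$ then forces $I(\chi,s)$ to be reducible'' whenever $I_1(\chi,s)$ is. Exactness gives no such thing: it is entirely consistent with exactness that $I(\chi,s)$ be irreducible while $FJ(I(\chi,s))=I_1(\chi,s)$ has length two, because a twisted-Jacquet-type functor like $FJ$ need not carry irreducibles to irreducibles. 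Neither the paper nor Theorem~\ref{FJ_local_degenerate_principal} asserts that $FJ$ reflects irreducibility, and your argument nowhere supplies this. Your fallback --- the complementary-series argument from the end of the proof of Theorem~\ref{comp_series_r=2_p-adic} --- only shows that reducibility must occur at \emph{some} point before the trivial representation appears (the complementary series terminates at the first reducibility point above $0$). For $r=2$ there is a single intermediate candidate, so this closes the argument; for $r\ge 3$ it forces reducibility only at $s=\pm 1$ and leaves $s=\pm(1+d/2),\ldots,\pm(1+(r-2)d/2)$ unaccounted for.

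The paper fills this hole differently. It first observes that for $r\ge 3$ the hypothesis $\chi_D=1$ forces $D$ to be either split (so $G$ is split simply laced and the full result is Weissman's) or a quaternion algebra. In the quaternion case the reducibility at \emph{every} listed point is obtained not from $FJ$ but from the Gindikin--Karpelevi\v{c}-type computation of the next section: Lemma~\ref{c_function_quaternionic} shows the $c$-function vanishes (to first order) at the points of Corollary~\ref{C:quaternionic}, so the normalized spherical vector lies in the kernel of the non-zero standard intertwining operator and hence generates a proper non-zero submodule. Combined with the $FJ$ upper bound and the contragredient symmetry $s\leftrightarrow -s$, this yields length exactly two at each point. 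If you want to keep your inductive scheme you must either import such an intertwining-operator input at each candidate point, or prove that $FJ$ of the hypothetical irreducible $I(\chi,s)$ would be irreducible --- which is the substance you are currently assuming. (A secondary, more minor issue: your base case invokes Theorem~\ref{comp_series_r=2_p-adic}(1), which requires $\dim D>2$, so the case $d=2$, i.e. $G=\SL_{2r}$, is not covered by your induction and must be delegated to Weissman as the paper does.)
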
 
    \begin{proof}  In view of the previous theorem, we can assume that $r\geq 3$. Hence $D$ is a composition algebra. Moreover, the condition $\chi_D=1$ implies that 
    $D$ is either split  (and even dimensional) or a quaternion algebra. In the former case $G$ is split and simply laced, so this case was covered by Weissman. 
    Hence it remains to do the case when $D$ is a quaternion algebra. 
    This is proved by induction on $r$.  Assuming the result for $r-1$, 
    the Fourier-Jacobi functor implies that reducibility points are possibly only those listed and the length of the composition series is not longer than two. 
    In the next section we shall show that,  for $\chi=1$, the spherical representation is a proper subquotient at the indicated points, see Corollary \ref{C:quaternionic}. 
     \end{proof} 
 
   \section{Intertwining operators} 
   We collect some facts we need about principal series representations. In this section $k$ is a local field. 
   
   \subsection{$c$--function for split groups} 
   Here we assume that $G$ is split and simply laced, i.e. $D$ is a sum of hyperbolic planes, 
   and let $B$ be the Borel subgroup, corresponding to our choice of simple roots.
    Let $\chi$ be an unramified character of $T$, for $w$ in the Weyl group of $G$ we have standard local intertwining operators $A(\chi,w)$ 
\[A(\chi,w):\mathrm{Ind}_{B}^{G}(\chi)\to \mathrm{Ind}_{B}^{G}(w(\chi)).\]
In this definition  the choice of the Haar measure on $k$ is such that, in the $p$-adic case, the measure of the ring of integers is 1. 
Let $f_{\chi}\in I(\chi)$ be the unique spherical vector normalized so that $f_{\chi}(1)=1$. Then $A(\chi,w) (f_\chi)= c(\chi,w) f_{w(\chi)}$ where 
the factor for $c(\chi,w)$ is given by the Gindikin-Karpelevič formula (cf. \cite{Gindikin-Karpelevich} for archimedean fields 
   and \cite{Langlands_Euler} for $p$-adic fields)
\begin{equation}
\label{norm_factors_defn}
c(\chi,w)=\prod_{\substack{\alpha \in \Phi^+,\\ w(\alpha)<0}}\frac{L(0,\chi\circ \alpha^{\vee})}{L(1,\chi \circ \alpha^{\vee})},
\end{equation}
where $\alpha^{\vee}$ is the co-root corresponding to a root $\alpha$ and $L$-functions are Tate's $L$-functions. 
 If $\chi\circ \alpha^{\vee} = |\cdot |^s$, then $L(0,\chi \circ \alpha^{\vee}) =\zeta(s)$ and $L(1,\chi \circ \alpha^{\vee}) =\zeta(s+1)$ where 
\[ 
\zeta(s) = (1-q^{-s})^{-1}
\] 
if $k$ is a $p$-adic field with the residual field of order $q$, and 
\[ 
\zeta(s)= \pi^{-\frac{s}{2}} \Gamma(s/2) 
\] 
if $k\cong \mathbb R$. We shall use this
formula to determine the action of the standard intertwining operator $A(s) : I(s) \rightarrow I(-s)$ 
\[ 
A(s)(f)(g)= \int_N  f(w_0 n g) ~dn 
\] 
on the spherical vector, where $w_0$ is the element defined by the equation (\ref{E:special_element}).
 It permutes simple roots of $M$ and maps the roots that span $N$ to the roots that span $\bar N$. Concretely, it is the product of the longest Weyl group elements of $G$ and $M$. 
Let $f_s\in I(s)$ be the spherical vector normalized by $f_s(1)=1$.  Let $\chi_s$ be an unramified character of $T$ such that $I(s)$ is a subrepresentation of 
$\mathrm{Ind}_{B}^{G}(\chi_s)$. Then $A(s)(f_s) =c(\chi_s, w_0) f_{-s}$, which reduces the computation to a combinatorial exercise. We summarize the result  
in the following lemma:

\begin{lem} \label{L:local_c} 
Let $I(s)$ be the degenerate principal series for split, simply laced $G$ arising from a parabolic $P=MN$ such that $N\cong J_r(D)$. Let $d=\dim D$. Let 
$f_s\in I(s)$ be the normalized spherical vector, and $c(s)$ the complex function defined by $A(s)(f_s) =c(s) f_{-s}$. 
  Then 
\[ 
c(s) =\prod_{i=0}^{r-1} \frac{\zeta(s- i d/2)}{\zeta(s +i d/2 +1)}
\] 
where $\zeta(s)$ is the local zeta function as above. 
\end{lem}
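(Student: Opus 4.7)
The plan is to apply the Gindikin--Karpelevi\v{c} formula (\ref{norm_factors_defn}) to the standard intertwining operator on the Borel-induced representation containing $I(s)$, and then telescope the resulting product.

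First, realize $I(s)$ as a subrepresentation of $\Ind_B^G(\chi_s)$ via induction in stages: since the character $|\omega|^s$ of $M$ sits as a subrepresentation of $\Ind_{B \cap M}^M(|\omega|^s \cdot \delta_{B \cap M}^{-1/2})$, matching normalizations yields, in additive notation for unramified characters, $\chi_s = s\omega - \rho_M$, with $\rho_M$ the half-sum of positive roots of $M$. Under this embedding, the normalized spherical vector $f_s$ corresponds to the normalized spherical vector $f_{\chi_s}$ of $\Ind_B^G(\chi_s)$, and $f_{-s}$ corresponds to $f_{w_0\chi_s}$. As a Weyl element, $w_0$ from (\ref{E:special_element}) is the product $w_0^G\cdot(w_0^M)^{-1}$ of the longest elements of $W_G$ and $W_M$: it fixes $\Phi(M)^+$ setwise and sends $\Phi(N)^+$ to $-\Phi(N)^+$, so $\{\alpha > 0 : w_0\alpha < 0\} = \Phi(N)^+$. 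Applying (\ref{norm_factors_defn}),
\[
c(s) = \prod_{\alpha \in \Phi(N)^+} \frac{\zeta(\langle\chi_s,\alpha^\vee\rangle)}{\zeta(\langle\chi_s,\alpha^\vee\rangle + 1)} = \prod_{\alpha \in \Phi(N)^+} \frac{\zeta(s - \langle\rho_M,\alpha^\vee\rangle)}{\zeta(s - \langle\rho_M,\alpha^\vee\rangle + 1)},
\]
where the second equality uses the key fact that, for $G$ simply laced and $N$ abelian, $\langle\omega,\alpha^\vee\rangle = m_j(\alpha) = 1$ for every $\alpha \in \Phi(N)^+$.

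The remaining task is the combinatorial identity
\[
\prod_{\alpha \in \Phi(N)^+} \frac{\zeta(s - \langle\rho_M,\alpha^\vee\rangle)}{\zeta(s - \langle\rho_M,\alpha^\vee\rangle + 1)} = \prod_{i=0}^{r-1} \frac{\zeta(s - id/2)}{\zeta(s + id/2 + 1)}.
\]
I would partition $\Phi(N)^+$ according to the Pierce decomposition $J = \bigoplus_{i \leq j} J_{ij}$: the root $\beta_{ii}$ spanning $J_{ii}$ contributes the factor $\zeta(s - (r+1-2i)d/2)/\zeta(s - (r+1-2i)d/2 + 1)$, while the $d$ roots whose root spaces lie in $J_{ij}$ (for $i<j$) contribute $\zeta$-factors whose exponents $s - \langle\rho_M,\alpha^\vee\rangle$ cluster symmetrically about $s - (r+1-i-j)d/2$. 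Cancellations occur both inside each $J_{ij}$ (for $d > 2$ the spread-out values cause internal telescoping) and between the Pierce pieces, collapsing the whole product to the claimed closed form.

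The principal obstacle is this final combinatorial step, since the distribution of the values $\langle\rho_M,\alpha^\vee\rangle$ within $J_{ij}$ is not uniform across types: for $A_{2n-1}$ (where $d = 2$) the two lifts of a short restricted root share the single value $r + 1 - i - j$, whereas for $D_{2n}$ and $E_7$ the values spread into a symmetric multiset around $(r+1-i-j)d/2$. The cleanest route is therefore a direct case-by-case verification for the three simply-laced entries of the table in Section~\ref{S:groups}, using an explicit choice of the strongly orthogonal roots $\beta_1,\ldots,\beta_r$ and of the torus $T_r$; a more conceptual alternative is to identify these values with weights of the action of the centralizer of $T_r$ in $M$ on each $J_{ij}$.
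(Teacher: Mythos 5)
Your proposal follows exactly the paper's route: embed $I(s)$ into $\Ind_B^G(\chi_s)$ with $\chi_s = s\omega-\rho_M$, note that $w_0$ (the product of the longest elements of $W_G$ and $W_M$) flips precisely the roots of $N$, apply the Gindikin--Karpelevi\v{c} formula, and reduce to a combinatorial telescoping identity --- which the paper itself dismisses as "a combinatorial exercise" and does not write out either. Your setup (including $\langle\omega,\alpha^\vee\rangle=1$ on $\Phi(N)$ in the simply laced case) is correct, and the proposed case-by-case telescoping does close the remaining gap, so this is essentially the paper's proof.
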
 

Now one can easily understand the poles and zeroes of $c(s)$, using the poles of $\zeta(s)$. 
 In the $p$-adic case,  for $s$ real, $\zeta(s)$ never vanishes and has a simple pole at $s=0$.  It follows that $c(s)$ has simple poles 
 at $s=0, d/2, \ldots , (r-1)d/2$ and simple zeros at $s=-1, -1- d/2 , \ldots, -1 - (r-1)d/2$. Thus,  at these points,  $I(s)$ has a 
 composition series of length two and the spherical representation is a unique irreducible submodule. 
  In the real case, $\zeta(s)$ has a simple pole at negative even integers. 
  Thus,  if $d$ is divisible by 4, $c(s)$ has zeros at negative odd integers and poles at even positive integers.  
  We summarize: 


\begin{cor} \label{C:non-vanishing_split} 
 If $G$ is split, the local field $k$ is $p$-adic or real, and $d\equiv 0 \pmod{4}$ then $c(s)$ is not vanishing at odd positive integers, in particular, at 
$s=1, 1+ d/2 , \ldots , 1 + (r-1)d/2$. 
\end{cor}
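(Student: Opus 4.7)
The plan is to read the required non-vanishing directly off the Gindikin–Karpelevič product
\[
c(s) =\prod_{i=0}^{r-1} \frac{\zeta(s- id/2)}{\zeta(s +id/2 +1)}
\]
from Lemma \ref{L:local_c}, by keeping careful track of the locations of the zeros and poles of the Tate factor $\zeta(s)$ in each of the two cases ($p$-adic and real), and then exploiting a parity argument that uses the hypothesis $d\equiv 0 \pmod 4$.

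First I would record that $\zeta(s)$ has no zeros for $s\in\mathbb{R}$ in either case. Its poles are at $s=0$ in the $p$-adic case, and at $s\in\{0,-2,-4,\ldots\}$ in the real case. Hence each factor $\zeta(s-id/2)/\zeta(s+id/2+1)$ is a meromorphic function whose zeros come only from poles of the denominator, and whose poles come only from poles of the numerator. To show $c(s_0)\neq 0$ at a given positive real $s_0$, it therefore suffices to check that for each $i\in\{0,1,\ldots,r-1\}$ the denominator $\zeta(s_0+id/2+1)$ is finite (no denominator pole, so no zero contribution), and that the numerator $\zeta(s_0-id/2)$ is also finite (so there is no indeterminate form that might secretly produce a zero).

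Next comes the parity step, which is the real content of the argument. Assume $s_0$ is a positive odd integer and $d\equiv 0\pmod 4$, so $d/2$ is even. Then for every $i$, the numerator argument $s_0-id/2$ is an odd integer, hence it is neither $0$ nor a negative even integer, so $\zeta(s_0-id/2)$ is finite and nonzero in both the $p$-adic and the real case. Similarly, the denominator argument $s_0+id/2+1$ is an even integer that is at least $2$, so it lies in the half-plane where $\zeta$ is finite and nonzero. Both the numerator and denominator of every factor in the product are therefore finite and nonzero, so the product is finite and nonzero. Applied at $s_0=1,\,1+d/2,\,\ldots,\,1+(r-1)d/2$ (all positive odd integers, since $d/2$ is even), this yields the corollary.

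The argument is really just a bookkeeping exercise rather than a substantive theorem, and the only mild obstacle is making sure no accidental coincidence of a pole in the numerator with a pole in the denominator (which would produce a hole in the reasoning about ``no zero'') ever arises. The parity check above rules this out uniformly, so no delicate cancellation analysis is needed.
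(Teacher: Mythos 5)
Your proposal is correct and follows essentially the same route as the paper: the corollary is deduced from the product formula of Lemma \ref{L:local_c} together with the observation that $\zeta(s)$ never vanishes for real $s$ and has poles only at $s=0$ (respectively at non-positive even integers in the real case), so that for $d\equiv 0\pmod 4$ the zeros of $c(s)$ sit at negative odd integers and its poles at even integers, leaving odd positive integers untouched. Your explicit parity bookkeeping for the arguments $s_0-id/2$ and $s_0+id/2+1$ is exactly the check the paper leaves implicit.
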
 

For split $p$-adic groups, the Satake parameters of the irreducible spherical quotients, at positive reducibility points, have a nice description. 
As previously, let $S=\{\beta_1, \ldots, \beta_r\}$ 
be a maximal set of strongly orthogonal roots spanning $\bar N$. Let $\varphi_i :\SL_2 \rightarrow G$ be the homomorphism 
corresponding to $\beta_i$, for every $i$.  For $j=2, \ldots, r$, let $\psi_j : \SL_2 \rightarrow G$, be the homomorphism given by the product of 
$\varphi_1, \ldots, \varphi_{j-1}$, and $\psi_1$ is the trivial homomorphism. 
(The actual choice of $S$ is not important, since for different choices of $S$ resulting $\psi_j$ are $G$-conjugated.)
The corresponding unipotent class in $G$ is $(j-1)A_1$ in the Bala-Carter notation. 
 Let $\hat G(\mathbb C)$ be the Langlands dual group.  
 Let $\hat\psi_j : \SL_2 \rightarrow \hat G(\mathbb C)$ be the homomorphism that corresponds to $\psi_j$ via the Spaltenstein order reversing map from 
 unipotent orbits of $G$ to unipotent orbits of $\hat G(\mathbb C)$ \cite{Carter}. 
Then the Satake parameter of the spherical quotient of $I(s)$ at the reducibility point $s_0=1+(r-j)d/2$ is 
\[ 
 \hat\psi_j \left(
\begin{matrix} q^{1/2} & 0 \\ 
0 & q^{-1/2} \end{matrix}
\right). 
\] 
If $ d\equiv 0 \pmod{4}$ then each $\hat\psi_j$ corresponds to a distinguished unipotent orbit in $\hat G(\mathbb C)$ i.e. one that 
it is not contained in a proper Levi. 
 It follows that the Aubert dual of the spherical representation is a square integrable representation. We record: 

\begin{prop} \label{P:aubert_dual} 
 If $G$ is split, the local field $k$ is $p$-adic or real, and $d\equiv 0 \pmod{4}$ then the spherical quotients at 
$s=1, 1+ d/2 , \ldots , 1 + (r-1)d/2$ are Aubert duals of square integrable representations. 
\end{prop}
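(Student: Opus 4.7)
The plan is to combine three ingredients already sketched in the paragraph preceding the proposition. The first step is to identify the Satake parameter of the spherical quotient at each reducibility point as $\hat\psi_j(\diag(q^{1/2}, q^{-1/2}))$. Concretely, the character of $T$ giving rise to $I(s_0)$ at $s_0 = 1 + (r-j)d/2$ is obtained from $\omega^{s_0}$ via the inclusion $T \hookrightarrow M$. Using \eqref{E:co-root} and the product structure $\psi_j = \varphi_1 \cdots \varphi_{j-1}$, I would read off its restriction to the subtorus generated by $\omega_1^\vee, \ldots, \omega_{j-1}^\vee$ and match it, up to Weyl conjugacy, with the image under $\psi_j$ of $\diag(q^{1/2}, q^{-1/2}) \in \SL_2$. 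Passing to the dual group through Spaltenstein's order-reversing map on unipotent orbits then gives the claimed Satake parameter, keeping in mind that the source orbit of $\psi_j$ is $(j-1)A_1$ in Bala--Carter notation.

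The second step is to verify, case by case across the types with $d \equiv 0 \pmod{4}$ — namely $D_{2n}$, $E_7$, and $D_{n+1}$ for $n$ odd — that the dual orbit $\hat\psi_j((j-1)A_1)$ is a distinguished nilpotent orbit of $\hat G(\mathbb C)$, i.e.\ one that is not contained in any proper Levi subalgebra. This is a finite combinatorial check using the standard tables of nilpotent orbits in classical and exceptional Lie algebras, together with the explicit combinatorics of Spaltenstein duality (partition transposes for the classical types, and the known involution on the Bala--Carter labels for $E_7$). The arithmetic condition $d \equiv 0 \pmod 4$ is precisely what forces the resulting orbit to be even and to avoid sitting inside any proper Levi.

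The third step invokes the standard principle that, in the $p$-adic setting, the Aubert--Zelevinsky involution carries an unramified irreducible representation whose Langlands parameter has the form $\hat\psi(\diag(q^{1/2}, q^{-1/2}))$ with $\hat\psi$ distinguished to an essentially square-integrable representation. In the archimedean case the corresponding statement follows from Vogan duality on the analogous Arthur parameter. The main obstacle is the combinatorics of the second step: although the first and third steps are essentially appeals to general machinery, showing that $(j-1)A_1$ Spaltenstein-dualizes to a distinguished orbit for all $j = 1, \ldots, r$ in each ambient type requires care, and it is here that the hypothesis $d \equiv 0 \pmod{4}$ is genuinely used.
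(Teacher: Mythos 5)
Your plan follows the paper's argument exactly: the paper's justification for this proposition is precisely the preceding paragraph, which identifies the Satake parameter of the spherical quotient at $s_0 = 1+(r-j)d/2$ as $\hat\psi_j(\diag(q^{1/2},q^{-1/2}))$ with $\hat\psi_j$ the Spaltenstein dual of $(j-1)A_1$, observes that $d\equiv 0\pmod 4$ forces this dual orbit to be distinguished, and then invokes the standard fact that the Aubert dual is then square integrable. Your write-up, including the case list $D_{2n}$, $E_7$, and $D_{n+1}$ with $n$ odd, is a correct and somewhat more explicit rendering of the same argument.
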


\subsection{$c$--function for non-split groups}

In this section $k$ is a $p$-adic field with the residual field of order $q$ and  $D$ is a  quaternion algebra over $k$. 
 Let $O$ be the maximal order in $D$ and $\pi$ a prime element in $O$. For every 
  $x\in D$ let $|x|$ be the reduced norm composed with the usual absolute value on $k$. In particular $|\pi|=q^{-1}$.  
 Let $I(s)$ be the principal series for $\SL_2(D)$ defined as the set of all smooth functions on $\SL_2(D)$ such that 
 \[ 
 f( ( \begin{smallmatrix} a & b \\ 0 & c\end{smallmatrix}) g)= |a/c|^{\frac{s}{2} + 1} f(g) 
 \]
  for all choices of data.  Consider the intertwining map $A(s) : I(s) \rightarrow I(-s)$ defined by 
 \[ 
 A(s)(f)(g) =\int_{D} f( ( \begin{smallmatrix} 0 & 1 \\ -1 & 0\end{smallmatrix}) ( \begin{smallmatrix} 1 & x \\ 0 & 1\end{smallmatrix}) g) ~dx 
 \] 
 where $dx$ is the invariant measure on $D$ normalized so that the volume of $O$ is 1.

 \begin{lem} Let $f_s\in I(s)$ be the unique $\SL_2(O)$-invariant function such that $f_s(1)=1$, and let $c(s)$ be the function defined by $M(s)(f_s)=c(s) f_{-s}$. Then 
 \[ 
 c(s) = \frac{\zeta(s)}{\zeta(s+2)}. 
 \]  
 where $\zeta(s) = (1-q^{-s})^{-1}$. 
 \end{lem}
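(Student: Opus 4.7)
The plan is a direct computation of $c(s)$ by evaluating $A(s)(f_s)(1)$ as an integral over $D$, exploiting the right $\SL_2(O)$-invariance of the spherical vector and the Iwasawa-type factorization in $\SL_2(D)$.

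First, I would split the domain of integration as $D = O \sqcup (D\setminus O)$. For $x \in O$, the matrix
\[
w n_x = \begin{pmatrix} 0 & 1 \\ -1 & 0 \end{pmatrix}\begin{pmatrix} 1 & x \\ 0 & 1 \end{pmatrix} = \begin{pmatrix} 0 & 1 \\ -1 & -x \end{pmatrix}
\]
lies in $\SL_2(O)$, so $f_s(w n_x) = f_s(1) = 1$, and this piece contributes $\operatorname{vol}(O) = 1$. For $x \notin O$ one has $x \in D^\times$ with $x^{-1} \in \pi O \subset O$, and the factorization
\[
\begin{pmatrix} 0 & 1 \\ -1 & -x \end{pmatrix} = \begin{pmatrix} -x^{-1} & 1 \\ 0 & -x \end{pmatrix}\begin{pmatrix} 1 & 0 \\ x^{-1} & 1 \end{pmatrix}
\]
places the second factor in $\SL_2(O)$. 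From the definition of $I(s)$, the first factor contributes $|{-x^{-1}}/({-x})|^{s/2+1}=|x|^{-s-2}$, so
\[
c(s) = 1 + \int_{D\setminus O} |x|^{-s-2}\,dx.
\]

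Next I would compute the integral using the structure of the quaternion division algebra: $O$ is a maximal order with residue field $\mathbb F_{q^2}$, so $\operatorname{vol}(\pi^n O) = q^{-2n}$. Decomposing $D \setminus O = \bigsqcup_{n\geq 1} (\pi^{-n}O \setminus \pi^{-n+1}O)$, each shell has measure $q^{2n}-q^{2n-2}=q^{2n-2}(q^2-1)$ and constant value $|x|=q^n$, so
\[
\int_{D\setminus O}|x|^{-s-2}\,dx = (1-q^{-2})\sum_{n=1}^{\infty} q^{-ns} = (1-q^{-2})\,\frac{q^{-s}}{1-q^{-s}},
\]
valid for $\operatorname{Re}(s) > 0$. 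Adding $1$ and simplifying gives
\[
c(s) = \frac{1-q^{-s-2}}{1-q^{-s}} = \frac{\zeta(s)}{\zeta(s+2)},
\]
and one extends meromorphically in $s$.

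There is no real obstacle here; the only delicate point is getting the measure normalization correct, namely using that the residue field of $O$ has order $q^2$ (so multiplication by $\pi$ scales the additive Haar measure by $q^{-2}$, not $q^{-1}$), which is what produces the $\zeta(s+2)$ rather than $\zeta(s+1)$ in the denominator and reflects that $D$ is $4$-dimensional over $k$ with reduced norm $|\pi|=q^{-1}$.
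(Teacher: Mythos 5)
Your proposal is correct and follows essentially the same route as the paper: decompose $D$ into the shells $\pi^{-n}(O\setminus \pi O)$, use the Iwasawa factorization to see that the integrand equals $|x|^{-s-2}=q^{-n(s+2)}$ on each shell of volume $q^{2n}-q^{2n-2}$, and sum the resulting geometric series to get $(1-q^{-s-2})/(1-q^{-s})$. Your treatment is slightly more explicit about the matrix factorization and the measure normalization (that multiplication by $\pi$ scales Haar measure on $D$ by $q^{-2}$), but the computation is identical to the paper's.
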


 \begin{proof} This is surely well known, but we include a short proof for convenience. The value $c(s)$ is equal to $M(s) f_s(1)$. 
Write $D$ as a union 
 \[ 
 O \cup \pi^{-1} (O\setminus (\pi)) \cup \pi^{-2} (O\setminus (\pi)) \cup \ldots. 
 \] 
 The function $x\mapsto f( ( \begin{smallmatrix} 0 & 1 \\ -1 & 0\end{smallmatrix}) ( \begin{smallmatrix} 1 & x \\ 0 & 1\end{smallmatrix}))$ is equal to $1$ on $O$ and to 
 $q^{-n(s+2)}$ on $\pi^{-n} (O\setminus (\pi))$. This leads us to the sum 
 \[ 
 c(s)=1+ (q^2-1)q^{-(s+2)} + (q^4-q^2) q^{-2(s+2)} + (q^6-q^4) q^{-3(s+2)} + \ldots  
 \] 
 which can be easily summed up to give the claimed result.
  \end{proof} 
 
 Now we can compute the $c$-function for the degenerate principal series $I(s)$ for the group $G$ corresponding to the Jordan algebra $J_r(D)$, by factoring the standard 
 intertwining map $A(s): I(s) \rightarrow I(-s)$ as a product of intertwining maps corresponding to simple root reflections in the restricted root system (of type $C_r$). We summarize 
 the computation in the following lemma:

  \begin{lem}
\label{c_function_quaternionic}
 Assume that $G$ corresponds to $J_r(D)$ where $D$ is a quaternion algebra. Let $c(s)$ be the function such that 
$A(s) f_s= c(s) f_{-s}$ where $A(s): I(s) \rightarrow I(-s)$ is the standard intertwining operator. Then, up to a non-zero constant, 
\[ 
c(s) =\prod_{i=0}^{r-1} \frac{\zeta(s- 2i)}{\zeta(s \pm (2i +1))}
\] 
where the signs in the denominator alternate in $i$, so that the sign is $+$ for $i=r-1$. In words, $c(s)$ has a simple pole at even integers 
$0,2, \ldots, 2(r-1)$ and a simple zero at odd integers $-1-2(r-1), 1+2(r-2), -1 - 2(r-3), ...$ 
\end{lem}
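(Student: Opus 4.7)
The strategy is to reduce the computation to a product of rank-one $c$-functions by factoring the standard intertwining operator $A(s)$ along a reduced expression of the relative Weyl element sending the inducing parameter to its negative. Concretely, let $w = w_0^M w_0^G$ in the relative Weyl group $W(C_r)$; acting on characters of the split torus $T_r$, this element sends $e_i \mapsto -e_{r+1-i}$, hence $\lambda = s(e_1 + \cdots + e_r) \mapsto -\lambda$. Its length equals the number of positive roots in $N$, namely $\binom{r}{2}+r = r(r+1)/2$. For any reduced expression $w = s_{i_1}\cdots s_{i_N}$ in the simple reflections of $C_r$ (short $s_1,\ldots,s_{r-1}$ corresponding to $e_i - e_{i+1}$, long $s_r$ corresponding to $2e_r$), induction in stages factors $A(s)$ into rank-one intertwining operators and expresses $c(s)$ as the product of the corresponding rank-one $c$-factors evaluated along the running character.

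For each short simple reflection, the associated rank-one Levi has restricted root of multiplicity $d = 4$ (the piece $J_{i,i+1} \cong D$), so its $c$-function is the $\SL_2(D)$ formula from the preceding lemma, namely $\zeta(t)/\zeta(t+2)$. For the long reflection $s_r$, the rank-one Levi is $\SL_2(k)$ (the root space is the one-dimensional $J_{rr} \cong k$), with classical $c$-function $\zeta(t)/\zeta(t+1)$. I would fix the reduced expression
\[
w = (s_r s_{r-1}\cdots s_1)(s_r s_{r-1}\cdots s_2)\cdots(s_r s_{r-1})(s_r),
\]
and track the running character pass by pass. The $i$-th grouping $s_r s_{r-1}\cdots s_{i+1}$ (for $i=0,\ldots,r-1$) successively moves a coordinate $s$ of the character from position $r-i$ through to position $r$, where the final long reflection $s_r$ negates it. This produces one long-root contribution of the shape $\zeta(s-2i)/\zeta(s-2i+1)$ or $\zeta(s-2i)/\zeta(s+2i+1)$, together with a string of short-root contributions whose numerators and denominators telescope against those of adjacent groupings.

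The main obstacle is the combinatorial bookkeeping of signs: after the telescoping, we must verify that exactly one $\zeta$-factor of the form $\zeta(s \pm (2i+1))$ survives in each denominator, with signs alternating as $i$ varies so that $i = r-1$ contributes a $+$. This alternation reflects the fact that the long reflection $s_r$ acts on characters already twisted by preceding short reflections, each twist flipping the sign of the $e_r$-coefficient. A direct induction on $r$, or equivalently an appeal to the Macdonald $c$-function formula for the restricted root system $C_r$ with multiplicities $(m_{\text{short}}, m_{\text{long}}) = (4,1)$, delivers, up to a nonzero constant (absorbing measure normalizations), the claimed product $\prod_{i=0}^{r-1} \zeta(s-2i)/\zeta(s\pm(2i+1))$. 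The stated locations of the simple poles at $s = 0, 2, \ldots, 2(r-1)$ and of the simple zeros at the indicated odd integers then follow by inspection from the fact that $\zeta(s) = (1-q^{-s})^{-1}$ has a unique simple pole at $s=0$ and no zeros.
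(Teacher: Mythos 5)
Your proposal is correct and follows essentially the same route as the paper: the paper likewise obtains the formula by factoring $A(s)$ into rank-one intertwining operators along the restricted root system of type $C_r$, using the $\SL_2(D)$ computation $\zeta(t)/\zeta(t+2)$ from the preceding lemma for the multiplicity-four (short) roots and the classical $\zeta(t)/\zeta(t+1)$ for the multiplicity-one (long) roots, and leaves the remaining Gindikin--Karpelevi\v{c}-type bookkeeping as a summarized computation, just as you do. The only cosmetic slip is that the root space attached to the short simple reflection $s_i$ is the $e_i-e_{i+1}$ root space inside $M$ rather than $J_{i,i+1}$ (which belongs to $e_i+e_{i+1}\subset N$), but both have multiplicity $d=4$ and yield the same rank-one factor, so nothing is affected.
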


\begin{cor}  \label{C:quaternionic} 
If $G$ corresponds to $J_r(D)$ where $D$ is a quaternion algebra, then the spherical vector generates 
a proper submodule of $I(s)$ for 
\[ 
s= -1-2(r-1), 1+2(r-2), -1 - 2(r-3), ....
\]  
\end{cor}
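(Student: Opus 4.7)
My plan is to deduce the corollary directly from Lemma \ref{c_function_quaternionic}. For each value $s_0$ in the given list, the lemma gives $c(s_0) = 0$, and since by definition $A(s)(f_s) = c(s)\, f_{-s}$, we conclude
\[
A(s_0)(f_{s_0}) = 0.
\]
Thus $f_{s_0}$ lies in $\ker A(s_0)$, a $G$-invariant subspace of $I(s_0)$, and the submodule generated by the spherical vector is contained in this kernel.

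It then suffices to show that $\ker A(s_0)$ is a proper submodule, i.e.\ that $A(s_0)$ is not identically zero. My plan is to factor $A(s)$ as a product of rank-one intertwining operators along a reduced decomposition of the long element in the restricted Weyl group of type $C_r$. The product formula of Lemma \ref{c_function_quaternionic} reflects this factorization, one $\zeta$-quotient per rank-one piece. By inspection of the formula, at each listed $s_0$ the zero of $c(s_0)$ is simple and localized to a single denominator factor, while all other factors are regular and non-vanishing: the numerator $\zeta(s-2i)$ contributes poles only at even $s$, and the denominator $\zeta(s \pm (2i+1))$ at odd $s$ with specific alternating signs, so no two factors can be simultaneously singular at $s_0$. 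Consequently only one rank-one operator degenerates at $s_0$, remains a non-zero $\SL_2$-type intertwiner, and the whole composition $A(s_0)$ is non-zero.

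Having $A(s_0) \not\equiv 0$, the kernel $\ker A(s_0)$ is a proper non-zero $G$-submodule of $I(s_0)$. By Theorem \ref{comp_series_r>2_p-adic}, the composition series at each such reducibility point has length exactly two and is non-split, so $\ker A(s_0)$ coincides with the unique proper subrepresentation. In particular $f_{s_0}$ generates a proper submodule, which is the required conclusion.

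The main obstacle I foresee is the rank-one bookkeeping in the factorization of $A(s)$: one has to match each $\zeta$-quotient in Lemma \ref{c_function_quaternionic} with the correct rank-one operator in a chosen reduced decomposition and verify the absence of coincident pole--zero cancellations at each $s_0$. Given the explicit shape of the formula and the disjointness of the even/odd pole sets noted above, this is an elementary check, but it is where all the content of the non-vanishing of $A(s_0)$ sits.
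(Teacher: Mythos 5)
Your argument is essentially the paper's: its entire proof is the observation that the standard intertwining operator is always non-zero while $c(s)$ vanishes at the listed points, so the spherical vector lies in the proper submodule $\ker A(s_0)$; your rank-one factorization along a reduced word in the restricted Weyl group of type $C_r$ is a reasonable way to justify the non-vanishing of $A(s_0)$ that the paper simply takes for granted. One caution: drop the closing appeal to Theorem \ref{comp_series_r>2_p-adic} — it is not needed for the stated conclusion (properness of $\ker A(s_0)$ already suffices), and it would be circular, since the proof of that theorem itself cites this corollary to show the spherical representation is a proper subquotient.
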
 
\begin{proof} 
The standard intertwining operator is always non-zero, and at these points the $c$-function vanishes. 
\end{proof} 

\begin{rem}
In particular, the quotient at $s= 1+ (r-2)$ (a minimal representation)  is not spherical.  This agrees with results of Gan and Savin \cite{Gan_Savin} where it is observed 
that minimal representations of tame non-quasi split groups are not spherical. 
\end{rem}

   \section{Real groups} 

In this section we assume that $k$ is a real or complex field and $J=J_r(D)$  and $d=\dim D$ is even. If $k=\mathbb R$ then $D$ is assumed to be either split or an anisotropic quadratic space of dimension divisible by 4. (Either of this condition will assure that $\chi_D=1$.) If $D$ and hence $G$ are split, then the principal series $I(s)$ reduces at the 
points $1, 1+ d/2, \ldots 1+ (r-1) d/2$ and the spherical representation is the unique irreducible quotient \cite{Sahi}. 
 
\smallskip 
Now we move to  $G$ corresponding to $J_r(D)$ where $D$ is an anisotropic quadratic space of dimension divisible by 4. 
We need the following facts, from \cite{Sahi_Tube}, the notation is taken from Section \ref{S:groups}. 
There is a maximal split torus $T_r$ in $G$ that 
gives rise to a restricted root system of type $C_r$.  
A maximal compact subgroup $K\subset G$ is the centralizer of an  involution given by conjugation action of 
\[ 
 \varphi \left(
\begin{matrix} 0 & - 1 \\ 
1 & 0 \end{matrix}
\right) 
\] 
where $\varphi : \SL_2 \rightarrow G$ arises from a set $S=\{ \beta_1, \ldots ,\beta_r\}$ of strongly orthogonal orthogonal roots. 
 Note that the matrix 
$\left(\begin{smallmatrix} 0 & -1 \\ 1& 0 \end{smallmatrix}\right)$ is conjugated to the matrix 
$\left(\begin{smallmatrix} i & 0 \\ 0& -i \end{smallmatrix}\right)$ in $\SL_2(\mathbb C)$ by the classical Cayley transform matrix. Since the centralizer of 
\[ 
 \varphi \left(
\begin{matrix} i & 0 \\ 
0 & -i \end{matrix}
\right) 
\] 
in $\mathfrak g_{\mathbb C}$ is $\mathfrak m_{\mathbb C}$ it follows that the Cayley transform conjugates $K_{\mathbb C}$ to $M_{\mathbb C}$. 
Let $\gamma_i$ be the weights for 
$K_{\mathbb C}$ obtained by transporting the weights $\beta_i$ by the Cayley transform. 
Now for every $r$-tuple of integers $a_1 \geq \ldots \geq a_r$ we have a $K$-type that corresponds to the irreducible 
representation of $K_{\mathbb C}$ with the highest weight 
\[ 
a_1 \gamma_1 + \cdots + a_r \gamma_r. 
\] 
These types appear in the degenerate principal series $I(s)$ with multiplicity one.  Sahi \cite{Sahi_Tube} has described the composition series of $I(s)$ as well as the 
Jantzen filtration which, in this case, simply measures the order of vanishing of the intertwining map $A(s) : I(s) \rightarrow I(-s)$ on each $K$-type. 
 We shall look only at the reducibility points 
 $1, 1+ d/2, \ldots ,1+ (r-1) d/2$. If we fix a reducibility point $1+ (i-1) d/2$, then the composition series of $I(s)$ has a shape of 
a truncated pyramid consisting of irreducible subquotients $V_{p,q}$ such that $p,q\geq 0$ and $r-i\leq p+q \leq r$. 
The subquotients of the Jantzen filtrations are the floors of the pyramid, in particular, they are 
isomorphic to direct sums of $V_{p,q}$ where $p+q=t$, a constant. The socle is the bottom floor i.e. $t=r$, and the co-socle is the top floor, i.e. $t=r-i$. 
The types of the  irreducible quotients $V_{p,q}$ of $I(1+ (i-1)d/2)$, in particular $p+q=r-i$, are particularly nice. They form a cone 
\[ 
(p-q)\frac{d}{4}(\gamma_1 + \ldots +\gamma_r) + (a_1 \gamma_1 + \cdots + a_r \gamma_r)
\] 
where  $a_{p+1}= \ldots = a_{n-q}=0$.

\section{Global results} 
In this section $k$ is a global field unless otherwise specified. 

\subsection{Global Fourier-Jacobi method} 
We follow here the work  of Ikeda \cite{Ikeda_FJ}. 
Let $Z\cong \mathbb G_a$  be the root subgroup corresponding to the highest root $\beta$.  
Recall that $Q=LV$ is the standard parabolic subgroup such that the Levi factor $L$ corresponds to the simple roots perpendicular to $\beta$. 
The unipotent radical $V$ is a Heisenberg group with the center $Z$.  Recall that $P=MN$ is a maximal parabolic subgroup in a standard position such that 
the unipotent radical $N$ is abelian.  In particular, $P$ contains $V$ and 
\[ 
V= (V\cap M) \cdot (V\cap N). 
\] 
One checks that $V \cap N$ is a maximal abelian subgroup of $V$. Write $Y=V\cap M$. Let $X$ be the unique abelian subgroup of $V$, normalized by the torus $T$, trivially intersecting $Z$, such that  $V\cap N= XZ$. Thus we can write $V=XYZ=YXZ$, where $XZ$ and $YZ$ are maximal abelian subgroups of $V$.  
Let $\psi$ denote a global or local, non-trivial additive character of $Z$.
The group commutator gives a $Z$-valued pairing between $X$ and $Y$. 
The pairing and $\psi$ define a Fourier transform from $S(X)$ and $S(Y)$ the spaces of Schwartz functions. Each of the two spaces realizes the Heisenberg 
representation of $V$, locally and globally. We shall use $S(X)$ unless specified otherwise. Let $G_1= [L,L]$ or an appropriate factor. 
 Let $J=G_1 V= V G_1$ be the Jacobi group. 
Then the Weil representation $\omega_{\psi}$  of $J$ is the unique extension of the Heisenberg representation of $V$ to $J$. 
Let $\mathbb A$ be the ring of adel\'es over $k$. Let 
$\Lambda$ be a functional on $S(X(\mathbb A))$ defined by 
\[ 
\Lambda(\phi)=\sum_{x\in X(k)} \phi(x) = \sum_{x\in X(k)} (\omega_{\psi}(x)(\phi))(0) 
\] 
for every $\phi\in S(X(\mathbb A))$. Now every $\phi$ defines an automorphic function $\Theta^{\phi}= \Lambda( \omega_{\psi}(g) \phi)$ on $J$. 
Let $f_s \in I(\chi,s)$ be a global holomorphic section. Then the Eisenstein series 
\begin{equation}
\label{defn_Eisenstein_series}
E(s)(g) = \sum_{\gamma\in P(k)\backslash G(k)} f_s(\gamma g) 
\end{equation} 
converges for $\Re(s)$ large enough. If $f$ is a smooth function on $Z(k)\backslash G(\mathbb A)$, define
\[ 
f_{\psi}(g) = \int_{Z(k)\backslash Z(\mathbb A)} f(zg) \bar \psi(z) ~dz. 
\] 
Write $G(k) =\cup_{w\in S} P(k) w Q(k)$, as a union of double cosets, and 
$E(s) =\sum_{w\in S} E(s)^{w}$ by breaking up the sum over individual cosets. It follows, from Lemma 4.2.2 in \cite{Weissman_small},  that $E(s)_{\psi}^w=0 $ except for $w=w_\beta$, representing the open double coset. 
It follows that, for $g_1\in G_1$, 
\[ 
\int_{V(k)\backslash V(\mathbb A)} E_{\psi}(v g_1) \overline{\Theta^{\phi}}(vg_1)  ~dv = 
\int_{V(k)\backslash V(\mathbb A)}  \sum_{\gamma\in P(k)\backslash P(k) w_{\beta} Q(k)} f_s(\gamma vg_1) \overline{\Theta^{\phi}}(vg_1) ~dv. 
\] 
Recall that $P_1=G_1\cap P$. It is easy to compute 
$w_{\beta}^{-1} P w_{\beta} \cap Q$ and verify that 
\[ 
P\backslash P w_{\beta} Q= w_{\beta} \cdot (Y Z\times P_1 \backslash G_1). 
\] 
Now the above integral can be written as 
\[ 
\int_{V(k)\backslash V(\mathbb A)}  \sum_{\gamma_2\in Y(k)Z(k)}\sum_{\gamma_1\in P_1(k)\backslash G_1(k)} f_s(w_{\beta} \gamma_2 \gamma_1  vg_1)
 \overline{\Theta^{\phi}}(\gamma_2 \gamma_1vg_1) ~dv. 
\] 
where we used $\Theta^{\phi}(vg_1)= \Theta^{\phi}(\gamma_2 \gamma_1vg_1)$. After the change of integration $v:= \gamma_1^{-1} v \gamma_1$, we can contract the integral and 
the first sum, giving 
\[ 
 \int_{X(k)\backslash V(\mathbb A)}  \sum_{\gamma_1\in P_1(k)\backslash G_1(k)} f_s(w_{\beta} v\gamma_1g_1) \overline{\Theta^{\phi}}(v\gamma_1 g_1) ~dv. 
\] 
Finally, using the definition $\Theta^{\phi}$, we arrive to 
\[
 \int_{V(\mathbb A)}  \sum_{\gamma_1\in P_1(k)\backslash G_1(k)} f_s(w_{\beta} v\gamma_1g_1) \overline{\omega_{\psi}(v\gamma_1 g_1)(\phi)(0)} ~dv. 
 \]
 Let, for $g_1\in G_1(\mathbb A)$, 
 \[ 
 F_{s}(g_1)  = \int_{V(\mathbb A)}   f_s(w_{\beta} vg_1) \overline{\omega_{\psi}(v g_1)(\phi)(0)} ~dv. 
 \] 
 Then $F_{s} \in I_1(\chi\chi_D,s)$, and we have shown that 
 \[ 
\int_{V(k)\backslash V(\mathbb A)} E_{\psi}(v g_1) \overline{\Theta^{\phi}}(vg_1) ~dv = 
 \sum_{\gamma_1\in P_1(k)\backslash G_1(k)} f_{1,s}(\gamma_1g_1) = E_F(s)(g_1)  
 \] 
 the Eisenstein series on $G_1$ attached to $F_{s}$. Of course, so far, this works  for $\Re(s)$ large enough. We shall now show that $F_s$  extends to a holomorphic section 
 for $\Re(s)>0$. Consider firstly the local situation. For $\Re(s)$ large enough, the local integral 
\[ 
F_s(g_1) = \int_{V}   f_s(w_{\beta} vg_1) \overline{\omega_{\psi}(v g_1)(\phi)(0)} ~dv 
 \] 
 defines an intertwining a map from $I(\chi,s) \otimes \omega_{\bar \psi}$ onto  $I_1(\chi\chi_D,s)$, intertwining the action of $J=G_1V$. 
 \medskip
 

\begin{lem}
\label{local_FJ_integral}
Recall of the notation from section 2.1.  The local intertwining map $(f_s, \phi) \rightarrow F_s$ from $I(\chi,s) \otimes \omega_{\bar \psi}$ to  $I_1(\chi\chi_D,s)$ extends holomorphically to the region $\Re(s)>-(r-1)\frac{d}{2}.$ The map is non-zero for every $s$ in the region. 
\end{lem}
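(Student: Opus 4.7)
The plan is to decompose the defining integral using the structure $V = XYZ$, reduce it to a Schwartz integral amenable to direct analysis, and then verify non-vanishing by producing an explicit test pair $(f_s,\phi)$.

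First, I would exploit the decomposition of $V$ together with the explicit form of $w_\beta$. Since $w_\beta$ arises from the $\mathrm{SL}_2$-triple attached to the highest root $\beta$, it normalizes $Z = \exp(\mathfrak g_\beta)$ and swaps it with its opposite, so one can perform the $Z$-integration first: via the Iwasawa decomposition of $w_\beta z$ inside the rank-one $\mathrm{SL}_2$, this step mirrors the rank-one intertwining operator and is paired with the $\psi$-character carried by $\overline{\omega_\psi(v)\phi(0)}$. The subsequent $Y$-integration implements a partial Fourier transform coming from the Heisenberg commutator of $X$ and $Y$, producing
\[
F_s(g_1) = \int_X f_s(w_\beta x g_1)\, \Phi_\phi(x, g_1)\, dx,
\]
where $\Phi_\phi(\cdot, g_1)$ is a Schwartz function on $X$ built from $\phi$ via the explicit formulas for $\omega_\psi$ on Heisenberg generators.

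Second, to extend the resulting integral meromorphically, I would apply the Iwasawa decomposition $w_\beta x = p(x) k(x)$ in $G$, which gives
\[
f_s(w_\beta x g_1) = \chi(p(x))\, |\omega(p(x))|^{s + 1 + (r-1)d/2}\, f_s(k(x) g_1).
\]
The modulus $|\omega(p(x))|$ depends polynomially on $x$ through the $T_r$-action, compatibly with the Pierce decomposition of $J_r(D)$: the coordinate $x$ runs over the pieces $J_{1j}$ for $j>1$, each of dimension $d$. Combined with the Schwartz decay of $\Phi_\phi$, absolute convergence holds whenever the product remains integrable, and a direct exponent count using $\rho_P(m) = |\omega(m)|^{2+(r-1)d}$ pins the boundary of convergence at $\Re(s) = -(r-1)d/2$. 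The intertwining property is inherited from the convergent region by uniqueness of analytic continuation.

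For non-vanishing, I would test the map on the normalized spherical section $f_s^{\circ}$ (or the minimal $K$-type section in the archimedean case) together with a standard Gaussian-type $\phi$. A computation parallel to the $c$-function calculations of Lemma \ref{L:local_c} and Lemma \ref{c_function_quaternionic} expresses $F_s(1)$ as an explicit product of local zeta factors that is non-zero throughout the region, at worst away from a discrete set. At any such exceptional point one instead chooses $\phi$ supported near a generic non-zero point of $X(k)$; the resulting integral is then absolutely convergent for all $s$ and manifestly non-zero by a small-support argument. The main obstacle is obtaining the sharp bound $\Re(s) > -(r-1)d/2$: this hinges on carefully tracking the $T_r$-weights on $X$ under the identification with the Pierce pieces $J_{1j}$, and matching them against the exponent from the Iwasawa decomposition of $w_\beta x$. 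Getting the constant $r-1$ (rather than a weaker one) requires fully exploiting the $d$-dimensional Schwartz decay of $\Phi_\phi$ on each of the $r-1$ pieces simultaneously.
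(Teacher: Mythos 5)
There is a genuine gap in the second step, and it concerns precisely the point you yourself identify as the main obstacle. The structural fact that drives the whole computation is that $w_\beta$ conjugates $X$ into $P$ (indeed $w_\beta x w_\beta^{-1}$ lies in the unipotent part of $M$, since for a root $\alpha$ of $X$ one has $\langle\alpha,\beta^\vee\rangle=1$ and $s_\beta(\alpha)=\alpha-\beta$ is a root of $M$). Hence $f_s(w_\beta x g_1)=f_s(w_\beta g_1)$ is \emph{constant} in $x\in X$: in your proposed Iwasawa decomposition $w_\beta x=p(x)k(x)$ the factor $p(x)$ is unipotent, so $|\omega(p(x))|\equiv 1$ and the formula $f_s(w_\beta xg_1)=\chi(p(x))|\omega(p(x))|^{s+1+(r-1)d/2}f_s(k(x)g_1)$ carries no $x$-dependence whatsoever. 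Your ``direct exponent count'' over the Pierce pieces $J_{1j}$, which is supposed to pin the boundary at $\Re(s)=-(r-1)d/2$, therefore computes nothing: the $X$-integral converges for all $s$ purely from Schwartz decay, and it only acts on $\phi$, producing the Fourier transform $\hat\phi$ evaluated on $Y$ (via the pairing $\psi([y,-x])$). The paper integrates over $X$ first for exactly this reason.

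The actual source of the bound $\Re(s)>-(r-1)\frac d2$ is the $Z$-integral, which you mention in passing (``mirrors the rank-one intertwining operator'') but from which you never extract the convergence range. Restricted to the $\SL_2$ through the highest root $\beta$, the inducing exponent is $s+1+(r-1)\frac d2$, so $\int_Z f_s(w_\beta yz)\bar\psi(z)\,dz$ converges absolutely exactly when $\Re(s)+1+(r-1)\frac d2>1$, with output polynomially bounded in $|y|$; the remaining $Y$-integral then converges against the rapidly decreasing $\hat\phi(y)$. This reduction to $\int_{YZ}f_s(w_\beta yz)\bar\psi(z)\overline{\hat\phi(y)}\,dy\,dz$ also makes non-vanishing immediate ($\hat\phi$ is an arbitrary Schwartz function), whereas your unramified zeta-factor computation of $F_s(1)$ only applies to special data and your fallback prescribes small support for $\phi$ where one would need it for $\hat\phi$. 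To repair the argument you should reorder the reductions as above and derive the bound from the highest-root $\SL_2$ direction, not from $X$.
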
 
\begin{proof}  
 We shall construct the continuation by writing the integral as an iterated integral, firstly integrating over $X$. We can assume that $g_1=1$, 
 and write $v=xyz$. Note that $w_{\beta}^{-1} x w_{\beta} \in P$, so $f_s(w_{\beta} xyz)=f_s(w_{\beta} yz)$. Since  
 $\omega_{\psi}(v )(\phi)(0)$ is equal to $\phi(x)\psi([y,-x])\psi(z)$, integrating over $X$ amounts to taking the Fourier transform of $\phi$, 
 evaluating at $y$. 
  Thus the local integral is equal to 
 \[ 
 \int_{YZ}   f_s(w_{\beta} yz) \bar{\psi}(z) \overline{\hat{\phi}(y)} ~dydz . 
 \] 
Next,  by an  easy $SL_2$-computation, the integral of $f_s(w_{\beta} yz)$  over $Z$ is absolutely converging if $\Re(s)>-(r-1)\frac{d}{2} $, and the output depends polynomially on $|y|$. Since 
$\hat\phi(y)$ is rapidly decreasing, it follows that the integral over $YZ$ is absolutely converging in the same range of $s$. By analytic continuation, 
the map $(f_s, \phi) \rightarrow F_s$ given by 
the iterated integral, intertwines the actions of $J=G_1V$, since it does so for large $\Re(s)$. The map is easily seen to be non-zero as $\hat\phi$ can be arbitrary. 
\end{proof} 

We continue in the setting of the previous lemma, and compute the local integral explicitly in the $p$-adic case, assuming that all data are unramified. 
In that case $\hat\phi$ is the characteristic function of  $Y(O)$, where $O$ is the ring of integers in the local field. 
Hence we can assume that $y\in Y(O)$, then $f_s(w_{\beta} yz)=f_s(w_{\beta} z)$ and the integral reduces to 
 \[ 
 \int_{Z}   f_s(w_{\beta} z) \bar{\psi}(z) dz = 1-\chi(\varpi)\frac{1}{q^{s+1+(r-1)\frac{d}{2}}}= L(s+1+(r-1)\frac{d}{2},\chi)^{-1}
 \] 
 where the quantity on the right hand side is obtained by a very easy $\SL_2$-computation (cf.~e.g.~\cite{Goldfeld_Hundley_book_1}, Proposition 1.6.5.). 
 Here $q$ is the order of the residue field of $O$ and ${\varpi}$ is the uniformizer.
 The product of these local factors is convergent and non-zero if $\Re(s)> -(r-1)\frac{d}{2}.$ Hence we have an analytic continuation of the global integral as well. 
 
\medskip
\begin{lem} \label{L:local_integral} 
Assume that  $\Re(s)>-(r-1)\frac{d}{2}$ and we are in a local, $p$-adic,  situation.  Assume that the assumptions of Theorems \ref{comp_series_r=2_p-adic} or \ref{comp_series_r>2_p-adic} are met.  Let $f_s\in I(\chi, s)$ and $\phi\in S(X)$. 
The map $(f_s, \phi) \rightarrow F_s$ from $I(\chi, s) \otimes \omega_{\bar \psi}$ to  $I_1(\chi \chi_D, s)$ is surjective.  Moreover, if $f_s$ is contained in the unique irreducible 
submodule of $I(\chi, s)$, then $F_s$ is contained in the unique irreducible submodule of $I_1(\chi\chi_D, s)$.
\end{lem}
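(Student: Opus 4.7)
The plan is to reinterpret the map $T:(f_s,\phi)\mapsto F_s$ via the Fourier--Jacobi functor and its adjunction, and then to reduce both assertions to the structure of $I_1(\chi\chi_D, s)$.

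By Lemma \ref{local_FJ_integral}, the integral defines a nonzero $J$-equivariant map
\[
T:\,I(\chi,s) \otimes \omega_{\bar\psi} \longrightarrow I_1(\chi\chi_D, s),
\]
where $V\subset J$ acts trivially on the target. The standard adjunction between the Fourier--Jacobi functor $FJ$ and tensoring with $\omega_\psi$ converts $T$ into a $G_1$-equivariant map $\tilde T:FJ(I(\chi,s)) \to I_1(\chi\chi_D, s)$. Using Theorem \ref{FJ_local_degenerate_principal} to identify $FJ(I(\chi,s))$ with $I_1(\chi\chi_D,s)$, I would view $\tilde T$ as a nonzero $G_1$-endomorphism of $I_1(\chi\chi_D,s)$.

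The next step is to establish $\mathrm{End}_{G_1}(I_1(\chi\chi_D,s)) = \C$. When $I_1$ is irreducible this is Schur's lemma; when $I_1$ is reducible, Theorems \ref{comp_series_r=2_p-adic} and \ref{comp_series_r>2_p-adic} give a length-two composition series with non-isomorphic socle $\sigma$ and cosocle $\tau$. Any endomorphism preserves $\sigma$ and descends to $\tau$, giving scalars $a$ and $b$ by Schur; if $a\neq b$, then $\tilde T - a\cdot\mathrm{id}$ vanishes on $\sigma$ and induces a nonzero $G_1$-map $\tau \to I_1$ whose image is a nonzero submodule isomorphic to a quotient of $\tau$, hence to $\sigma$, contradicting $\sigma \not\cong \tau$. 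Thus $\tilde T$ is a nonzero scalar, therefore an isomorphism, and $T$ is surjective, which proves the first claim.

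For the submodule claim, apply exactness of $FJ$ to $0\to I^{\mathrm{sub}}\to I(\chi,s)\to I^{\mathrm{quot}}\to 0$ to obtain an inclusion $FJ(I^{\mathrm{sub}}) \hookrightarrow FJ(I(\chi,s))=I_1$; naturality of the adjunction identifies $T(I^{\mathrm{sub}}\otimes \omega_{\bar\psi})$ with the $G_1$-submodule $FJ(I^{\mathrm{sub}})\subseteq I_1$. Since $I_1^{\mathrm{sub}}$ is the unique nonzero proper $G_1$-submodule of $I_1$, it remains to rule out $FJ(I^{\mathrm{sub}})=I_1$, which would force $FJ(I^{\mathrm{quot}})=0$ by exactness. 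By the discussion following Theorem \ref{FJ_local_degenerate_principal}, in the transitive cases $FJ$ kills only the trivial representation; and among reducibility points in the range $\Re(s)>-(r-1)d/2$ the only one where $I^{\mathrm{quot}}$ is trivial is $s=1+(r-1)d/2$, where $G_1$'s highest reducibility point is $1+(r-2)d/2<s$, so $I_1$ is already irreducible and the inclusion is automatic.

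The main obstacle I foresee is the non-transitive cases $G=\Sp_{2r}$ and $\SU_{2r}$, where $L$ fails to act transitively on the nontrivial characters of $Z$; there $FJ$ can annihilate certain nontrivial irreducibles depending on the choice of $\psi$, so the exactness argument for the submodule claim needs a finer orbit analysis and a choice of $\psi$ adapted to the composition factors of $I(\chi,s)$. The borderline instance $\chi\neq 1$ and $s=0$, where $I(\chi,s)$ is a direct sum of two non-isomorphic irreducibles, should be read vacuously since no unique irreducible submodule exists there.
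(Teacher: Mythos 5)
Your proposal is correct and follows essentially the same route as the paper: the paper likewise descends the integral to a nonzero $G_1$-endomorphism of $I_1(\chi\chi_D,s)$ via the identification $I(\chi,s)_{Z,\psi}\cong I_1(\chi\chi_D,s)\otimes\omega_{\psi}$ and Schur's lemma applied to $(\omega_{\psi}\otimes\omega_{\bar\psi})_V$, concludes it is a multiple of the identity from the length-two, multiplicity-free, indecomposable structure of $I_1$, and reads off both surjectivity and the submodule claim from that description. Your closing worry about the non-transitive cases $\Sp_{2r}$ and $\SU_{2r}$ is moot, since the hypotheses of Theorems \ref{comp_series_r=2_p-adic} and \ref{comp_series_r>2_p-adic} (namely $\dim D>2$ when $r=2$, resp. $\chi_D=1$) exclude $J_r(k)$ and $J_r(K)$, so transitivity holds in every case the lemma covers.
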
 
\begin{proof}  
We remark that Ikeda proves surjectivity by an explicit calculation. Here we present another argument in the $p$-adic case which gives the additional information about submodules. 
The map is clearly non-zero, and it intertwines the actions of $G_1V$ where  $V$ acts trivially on $I_1(\chi \chi_D,s)$.  In particular, $Z$ acts trivially on $I_1(\chi \chi_D,s)$  and 
the map descends to a non-zero map 
 from $I(\chi, s)_{Z,\psi}\otimes \omega_{\bar\psi}$ to $I_1(\chi \chi_D, s).$ 
 By Proposition 3.2. of \cite{Weissman_small} and Theorem \ref{FJ_local_degenerate_principal}, $I(\chi, s)_{Z,\psi} \cong I_1(\chi\chi_D, s) \otimes \omega_{\psi}$, as $G_1V$-module. 
  Since $V$ acts trivially on $I_1(\chi\chi_D, s)$, the map descends to a map from $I_1(\chi\chi_D, s) \otimes (\omega_{\psi}\otimes\omega_{\bar\psi})_V$ to $I_1(\chi\chi_D, s)$ intertwining the actions of $G_1$. 
Here $(\omega_{\psi}\otimes\omega_{\bar\psi})_V$ denotes the maximal quotient on which $V$ acts trivially. It is one-dimensional by Schur's lemma. 
Hence we get a non-trivial map from $I_1(\chi\chi_D, s)$ to $I_1(\chi\chi_D, s)$.  By our assumption on $G$, $I_1(s)$, if reducible, is of length 2, multiplicity free 
 and indecomposable, this map must be a multiple of the identity map.  From this description of the map $(f_s, \phi) \rightarrow F_s$ it is straightforward to check the lemma. 
\end{proof}

 \begin{prop}
\label{possible_pole_Eisenstein} Assume $G$ arises from a Jordan algebra $J_r(D)$. Let $d=\dim D$. 
   Let $\chi$  be a Grossencharacter satisfying $\chi^2=1$. Let $E(s)$ be the Eisenstein series 
  arising from  a holomorphic section $f_s$ of the principal series $I(\chi,s)$. 
\begin{enumerate} 
\item  Assume that $\chi_D=1$. Then $E(s)$ is holomorphic at $s_0>0$ except, possibly, when 
$\chi=1$ and  $s_0=  1, 1+d/2, \ldots, 1+ (r-1) d/2$. 

  \item Assume $r=2.$ 
\begin{enumerate}
\item Assume $d=2n-1$.  Then $E(s)$ is holomorphic at $s_0>0$ except, possibly, when $s_0=1/2$, or $\chi=1$ and $s_0=n+\frac{1}{2}$, where 
the trivial representation is the residue. 
  \item Assume $d=2n-2$. Let $\chi_D$ be the Grossencharacter attached globally to $D.$ 
  Then $E(s)$ is holomorphic at $s_0>0$ except, possibly, when $\chi=\chi_D$ and $s_0=1$, or $\chi=1$ and $s_0=n$, where the trivial representation is the residue. 
  \end{enumerate}
 \end{enumerate}
 The possible poles are at most simple.  Moreover, if the local component of $f_{s_0}$ at any $p$-adic place is contained in the unique submodule of $I(\chi, s_0)$, then $E(\chi,s)(f)$ is holomorphic at $s_0.$ 
 \end{prop}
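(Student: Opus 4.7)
The plan is to argue by induction on $r$, combining the Fourier--Jacobi identity established above with a standard analysis of the constant term of $E(s)$ along $P$, namely $E^P(s) = f_s + M(s) f_s$, where $M(s)$ is the global intertwining operator. The poles of $E(s)$ coincide with those of $E^P(s)$, and $M(s)$ factors as a product of local intertwining operators normalized by an explicit scalar ratio of completed $L$-functions. The location of possible poles is then read off from the poles of this ratio, which on unramified spherical sections is computed by Lemma \ref{L:local_c}, provided the normalized local intertwining operators are holomorphic in the relevant region.

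The base case is statement (2) for $r = 2$, which follows from the constant-term analysis together with the classical reducibility of the $\SL_2$ (respectively $\widetilde{\SL}_2$) principal series. For the inductive step of (1) with $r \geq 3$, assume the proposition for $G_1$ (attached to $J_{r-1}(D)$) and suppose $E(s)$ has a pole at some $s_0 > 0$ with non-zero residue $\mathcal{E}_0$. If some non-trivial $\psi$-Fourier coefficient $\mathcal{E}_{0,\psi}$ is non-zero, choose Schwartz data $\phi \in S(X(\mathbb{A}))$ so that the pairing against $\Theta^\phi$ is non-trivial, and apply
\[
\int_{V(k)\backslash V(\mathbb{A})} E_\psi(v g_1) \overline{\Theta^{\phi}}(v g_1)\, dv = E_F(s)(g_1);
\]
the Eisenstein series $E_F(s)$ on $G_1$, attached to a section of $I_1(\chi\chi_D, s) = I_1(\chi, s)$ (since $\chi_D = 1$), then also has a pole at $s_0$, and the inductive hypothesis forces $\chi = 1$ with $s_0 \in \{1, 1+d/2, \ldots, 1+(r-2)d/2\}$. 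Otherwise $\mathcal{E}_0$ is constant along $Z$; using the transitivity of $L$ on non-trivial characters of $Z$ (valid for $r \geq 3$ since $D$ is a composition algebra, so $G$ is not of type $\Sp_{2r}$ or $\SU_{2r}$) together with the constant term along $Q = LV$, one deduces that $\mathcal{E}_0$ is the trivial representation and that $s_0 = 1 + (r-1)d/2$ with $\chi = 1$.

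Simplicity of poles then follows because the scalar normalizer of $M(s)$ has at most simple poles at the allowed points and the normalized local intertwining operators are holomorphic in the region $\Re(s) \geq 0$. For the ``Moreover'' clause, assume that at some $p$-adic place $v$ the component $f_{s_0,v}$ lies in the unique irreducible submodule of $I(\chi_v, s_0)$. Locally, this submodule is the kernel of the normalized intertwining operator $M^*_v(s_0)$, so $M^*_v(s_0) f_{s_0,v} = 0$; this contributes an extra zero at $v$ that cancels the simple pole of the scalar normalizing factor. Hence $M(s) f_s$ is holomorphic at $s_0$, and so are $E^P(s)$ and $E(s)$.

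The hardest step will be the second case of the inductive step, where $\mathcal{E}_0$ has trivial $Z$-Fourier expansion: ruling out exotic constant-along-$Z$ residues at $s_0 \neq 1 + (r-1)d/2$ requires a careful Bruhat decomposition of the constant term along $Q$ together with a precise identification of which residual contributions can survive, in order to isolate the trivial representation at $s_0 = 1 + (r-1)d/2$ as the only possibility.
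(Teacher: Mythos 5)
Your overall strategy --- induction on $r$ via the Fourier--Jacobi descent $E_\psi \mapsto E_F$ --- is the same as the paper's, but two of your key steps do not work as written. First, the identity $E^P(s)=f_s+M(s)f_s$ is false for $r\geq 2$: the constant term of a degenerate Eisenstein series along the maximal parabolic $P$ is a sum over all $P(k)\backslash G(k)/P(k)$ double cosets, of which there are $r+1$ (think of the Siegel Eisenstein series on $\Sp_{2n}$). Since both your simplicity-of-poles argument and your ``Moreover'' argument rest on this two-term formula (holomorphy of $M(s)f_s$ alone does not give holomorphy of the constant term, and the other summands can carry poles), they do not go through; you would also need to justify the unproved claims that the normalized local operators are holomorphic for $\Re(s)\geq 0$ and that the unique irreducible submodule is exactly the kernel of the normalized operator. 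The paper instead gets both conclusions from the induction itself: the order of the pole of $E(s)$ equals the order of the pole of $E_F(s)$ for suitable $\phi$, and Lemma \ref{L:local_integral} shows the local FJ integral carries the unique irreducible submodule of $I(\chi,s_0)$ into that of $I_1(\chi\chi_D,s_0)$, so a section with a local component in the submodule descends to one for which $E_F$ is holomorphic by the inductive hypothesis.

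Second, the step you yourself flag as hardest --- ruling out residues with vanishing non-trivial $Z$-Fourier coefficients at $s_0\neq 1+(r-1)d/2$ --- is left open, and your proposed route via the constant term along $Q$ is not what is needed. The paper disposes of it with a short local argument: if $f_\psi\equiv 0$ for all non-trivial $\psi$, then $f$ is $Z(\mathbb A)$-invariant, hence by weak approximation $Z(k_v)$-invariant for each place $v$; since $Z(k_v)$ normally generates $G(k_v)$, the $v$-adic component of the residual representation is trivial, which is impossible unless $\chi=1$ and $s_0=1+(r-1)d/2$ (the only point where the trivial representation is a local subquotient of $I_v(\chi_v,s_0)$). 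There is also a small but necessary refinement you omit: one needs $f_\psi(1)\neq 0$ (not merely $f_\psi\not\equiv 0$) to feed the theta pairing, which the paper arranges by right translation at the finite places and by differentiating with an element of the universal enveloping algebra at the archimedean places. Finally, note that the natural base case of the induction is $r=1$, i.e.\ $G=\SL_2$ (or its double cover), reached from $r=2$ by one FJ step; your case (2) is already an inductive step, not the base case.
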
 
 \begin{proof} 
 For notational convenience we deal with the first case. Fix $\Re(s_0)>0$, and expand 
 \[ 
 E(s)(g) = \frac{f(g)}{(s-s_0)^l} + \text{ higher powers of } (s-s_0) 
 \] 
 where $f(g)$ is the residual form. 
 Then  for $s_0= 1+ (r-1) d/2$ and $\chi=1$ we have $l=1$ and the residual representation is the trivial representation; this is a well known case. 
 So assume that $s_0\ne 1+ (r-1) d/2$  or $\chi\neq 1$. Let 
 $\mathcal A$ be the residual automorphic representation.  (We work with spaces of $K$-finite functions.) 
 We claim that there exists $f\in \mathcal A$ such that the global Fourier coefficient $f_{\psi}(1)$ is non-zero. 
 Assume that $f_{\psi}(g)=0$ for all non-trivial characters $\psi$ and $g\in G(\mathbb A)$. 
 Then $f$ is left $Z(\mathbb A)$-invariant. Let $v$ be a local place and $\mathbb A_v$ the ring of adel\'es with the local factor $k_v$ removed. 
  By the weak approximation theorem, $f$ is determined by its restriction to $G(\mathbb A_v)$. Since $G(\mathbb A_v)$ and $Z(k_v)$ commute, it follows that 
 $f$ is left $Z(k_v)$-invariant. If this is true for every $f$, then the $v$-adic component of $\mathcal A$ is the trivial representation, a contradiction to the assumption on $s$. 
 Hence there exists $f\in \mathcal A$ and $g\in G(\mathbb A)$ such that $f_{\psi}(g)\neq 0.$  We write $g=g_{\infty}g_f,$ where $g_{\infty}$ denotes the archimedean part, and $g_f$ the part belonging to the finite adeles. We easily get rid of the $g_f$--part by a right translation, so we can assume that $f_{\psi}(g)\neq 0$ for $g\in G(\mathbb A_{\infty})$ (i.e. $G(\R),$ if we are working over $\Q$). Since $f$ is $K$--finite it is analytic, and then $f_{\psi}$ is analytic as well. So we expand  this non-trivial $f_{\psi}$ near identity, and there exists an element of the universal enveloping algebra, say $\mathfrak D$, such that $\mathfrak D(f_{\psi})(1)\neq 0,$ 
 but $\mathfrak D(f_{\psi})(1)=(\mathfrak Df)_{\psi}(1),$ so we have found  $h\in \mathcal A$ such that $h_{\psi}(1)\neq 0.$

 So let $f\in \mathcal A$ such that $f_{\psi}(1)\neq 0$. There exists $\phi\in S(X(\mathbb A))$ such that 
 \[ 
g_1\mapsto \int_{V(k)\backslash V(\mathbb A)} f_{\psi}(v g_1) \overline{\Theta^{\phi}}(vg_1)  ~dv
\] 
is a non-trivial function on $G_1$. It follows that $E_F(s)$ has a pole of order $l$. By the induction assumption $l =0$ or $1$ and $l=1$ only if $s_0$ is one of the 
listed values. Furthermore, by the induction assumption and Lemma \ref{L:local_integral}, $E(s)$ has no pole if the local component of $f_{s_0}$ is in the irreducible submodule of $I(s_0)$. 
 \end{proof}

 \subsection{Main result} 
 
To prove existence of the poles i.e.~the if and only if result, once can argue as Ikeda and prove that the local integral is surjective at the archimedean places. 
Instead we shall compute the constant term of the Eisenstein series along the unipotent radical of the minimal parabolic. 
The full constant terms involves a complicated sum over the Weyl group, however, we shall look only the summand where the intertwining operator $A(s)$ appears.  
This will give us not only existence of the pole, but also 
 a control of the structure of the residual representation. In order to keep arguments as simple as possible, we shall henceforth work with 
$J_r(D)$ such that $d\equiv 0 \pmod{4}$ and $D$ has trivial discriminant i.e. $\chi_D=1$. Then the above  result simply says that 
 $E(s)$ has possible simple poles at odd integers $1, 1+ d/2, \ldots, 1 + (r-1) d/2$.

\begin{thm} Assume $G$ corresponds to $J_r(D)$ such that $d\equiv 0 \pmod{4}$. In addition, assume that: 
\begin{enumerate} 
\item The discriminant of the quadratic space $D$ is trivial, i.e. $\chi_D=1$. 
\item The quadratic space $D$ is either split or totally anisotropic. 
\item For every real place $v$, $D_v$ is either split or totally anisotropic. 
\end{enumerate} 
Then the Eisenstein series $E(s)$ has simple poles at $s_0=1, 1+ d/2, \ldots, 1 + (r-1) d/2$.  At each $s_0$ the residual representation is square integrable and 
isomorphic to the co-socle of the global degenerate principal series $I(s_0)$. 
\end{thm}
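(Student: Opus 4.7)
The plan is to prove all three assertions—existence of simple poles, identification of the residue with the co-socle, and square integrability—by isolating a single summand in the constant term of $E(s)$ along the unipotent radical $U$ of a Borel subgroup contained in $P$. The Bruhat decomposition writes this constant term as a sum $\sum_w M(w,s)f_s$ indexed by $W^M\backslash W/W^M$, and I would single out the summand attached to the element $w_0$ of equation \eqref{E:special_element}, which on the normalized global spherical section $f_s$ equals $c(s)f_{-s}$, where $c(s)$ is the global product of the local $c$-functions computed in Lemma \ref{L:local_c} (split places) and Lemma \ref{c_function_quaternionic} (anisotropic quaternionic places). Taking the product over all places, $c(s)$ becomes a product of completed Dedekind zeta quotients whose numerator produces simple poles at exactly the points $s_0=1,1+d/2,\ldots,1+(r-1)d/2$, while the denominator is finite and nonvanishing there by Corollary \ref{C:non-vanishing_split} (using $d\equiv 0\pmod 4$). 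A combinatorial check that the remaining Weyl summands are holomorphic at these $s_0$ yields a simple pole of $E(s)(f_s)$ with nonzero residue; combined with the upper bound of Proposition \ref{possible_pole_Eisenstein}, this pins down the location and order of the poles.

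For the co-socle identification, Proposition \ref{possible_pole_Eisenstein} together with Lemma \ref{L:local_integral} shows the residue map annihilates any section whose local component at some $p$-adic place lies in the unique irreducible submodule of $I(s_0)_v$. Hence the residue factors through the global co-socle of $I(s_0)$. Under the assumptions of the theorem, each local factor of $I(s_0)_v$ has length two with irreducible co-socle by Theorems \ref{comp_series_r=2_p-adic}, \ref{comp_series_r>2_p-adic} and the Jantzen-filtration description in Section 5, so the global co-socle is itself irreducible; nonvanishing of the residue on the spherical vector therefore implies the claimed isomorphism. For square integrability, I would apply Langlands' criterion by reading off the exponents along the minimal parabolic from the same constant-term computation: the $w_0$-term contributes the real exponent $-s_0\omega$, which for $s_0\geq 1$ lies strictly inside the negative chamber for the restricted root system of type $C_r$ described in Section \ref{S:groups}; the shorter Weyl summands, once their singular behaviour is controlled, contribute exponents whose real parts are bounded by $-s_0\omega$ on each simple co-root. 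At the archimedean places the Aubert-dual and Jantzen data recorded in Proposition \ref{P:aubert_dual} and Section 5 supply the missing tempered-dual structure, and at the $p$-adic places the same proposition identifies the local component of the residue as the Aubert dual of a square integrable representation.

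The main obstacle I anticipate lies in a careful treatment of the Weyl sum at the positive integer points $s_0$. One must verify (i) that no summand other than $M(w_0,s)f_s$ develops a pole at $s_0$, which would either double the pole and contradict the simplicity claim of Proposition \ref{possible_pole_Eisenstein} or cancel it and destroy existence; and (ii) that the exponents contributed by those summands simultaneously satisfy Langlands' strict negativity at every one of the $r$ reducibility points, not just the innermost point $s_0=1+(r-1)d/2$ where the residue is simply the trivial representation. Both points reduce to a combinatorial analysis of the restricted root data of type $C_r$ together with the explicit local $c$-function formulas of Lemmas \ref{L:local_c} and \ref{c_function_quaternionic}, but accommodating the totally anisotropic quaternionic case uniformly across many places will require the most care.
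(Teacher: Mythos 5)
Your overall strategy (isolate the $w_0$-summand of the constant term along the minimal parabolic, get the pole from the global $c$-function, and check that no other Weyl summand interferes) is the paper's strategy, but there are two genuine gaps. First, you cannot detect the poles with the normalized global spherical section when $D$ is anisotropic. At a $p$-adic place $v$ where $D_v$ is a quaternion algebra, Lemma \ref{c_function_quaternionic} shows that $c_v(s)$ has simple \emph{zeros} at roughly half of the positive reducibility points (those of the form $1+2(r-2), 1+2(r-4),\ldots$), so the global product of $c$-functions loses its pole there; Corollary \ref{C:non-vanishing_split}, which you invoke for non-vanishing of the denominator, applies only at split places. Equivalently, by Corollary \ref{C:quaternionic} the spherical vector at such a place lies in the proper submodule of $I_v(s_0)$, and then the holomorphy criterion you yourself quote from Proposition \ref{possible_pole_Eisenstein} forces the spherical Eisenstein series to be \emph{holomorphic} at those $s_0$. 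The fix (and what the paper does) is to take $f_v(s)$ arbitrary at the finite set $S$ of non-split places, write $A(s)f(s)=c^S(s)\bigl(\otimes_{v\in S}c_v(s)^{-1}A_v(s)f_v(s)\bigr)\otimes\bigl(\otimes_{v\notin S}f_v(-s)\bigr)$ with $c^S$ the partial global $c$-function, and use that the full local operators $A_v(s)$ never vanish identically.

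Second, your identification of the residue with the co-socle assumes "the global co-socle is itself irreducible," which is false under hypothesis (3): at a real place with $D_v$ totally anisotropic, Sahi's pyramid description in Section 5 shows the co-socle of $I_v(s_0)$ is a \emph{direct sum} of several constituents $V_{p,q}$ with $p+q=r-i$. Non-vanishing of the residue on a single (spherical) vector therefore only produces one irreducible summand; to get the full co-socle you must let the section run through every $K$-type of the co-socle at the bad places and use that the order of vanishing of $A_v(s)$ on each $K$-type is exactly the Jantzen filtration level, so the pole survives on all of them. Two smaller remarks: the paper rules out interference from other Weyl summands not by proving they are holomorphic (your step (i), which is stronger than needed) but by showing $\chi_{s_0}$ is regular, so $w(\chi_{s_0})\neq w_0(\chi_{s_0})$ for $w\neq w_0$ in $W(M)$ and no cancellation of exponents can occur; and square integrability is obtained not from Langlands' exponent criterion but from the observation that at split $p$-adic places only the spherical quotient survives in the residue and, by Proposition \ref{P:aubert_dual}, that quotient is the Aubert dual of a discrete series. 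Your Langlands-criterion route could work but is left entirely unexecuted and would require exactly the exponent control you flag as the hard point.
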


Observe that the conditions 1) - 3) are automatically satisfied if $r\geq 3$. 
\begin{proof}  

Let $E(s)$ be the Eisenstein series attached to a holomorphic section $f(s)= \otimes_v f_v(s)$ in $I(s)=\otimes_v I_v(s)$.  Let $s_0$ be one of the points, and $v$ a
$p$-adic place where $G$ is split. ($G$ is split at almost all primes, as we shall argue in a moment.)  If $f_v(s_0)$ belongs to the irreducible submodule of 
$I_v(s_0)$ then $E(s)$ is holomorphic at $s_0$.  In particular, only the irreducible spherical quotient at the place $v$ can contribute to the residual representation. 
By Proposition  \ref{P:aubert_dual}  the spherical quotient of $I_v(s_0)$ is the Aubert dual of a square integrable representation. It follows that 
the residual representation is square integrable. Hence it decomposes as a direct sum of irreducible representations, so it must be a quotient of the 
co-socle of $I(s_0)$. In order to show that the residual representation is the full co-socle we need to show that the pole is achieved as the section 
$f(s)$ passes through types belonging to irreducible representations in the co-socle. 

\smallskip 

Let $\Phi$ denote the root system of $G$ relative to a maximal split torus. If $D$ is split then $G$ is split (Chevalley) group, if $D$ is anisotropic, 
then the maximal split torus we can take $T_r$ as in Section  \ref{S:groups} . 
 Let $W$ be the corresponding Weyl group. Let $P_0=M_0N_0$ be a minimal parabolic subgroup containing the split torus, corresponding to a choice of positive roots $\Phi^+$ 
 in $\Phi$, and we can assume that the parabolic group  $P=MN$ in the standard position i.e. $M_0\subseteq M$ and $N\subseteq N_0$.
  Let $\Phi_M^+ \subseteq \Phi^+$ be the positive roots for $M$.  Let 
 \[ 
W(M) =\{ w\in W:w(\Phi_M^+)>0\}.
\] 
The element $w_0$, the product of the longest Weyl group elements for $G$ and $M$,  belongs to $W(M)$. 
We shall use that $w_0$ permutes $\Phi_M^+$ and that $w_0(\omega)=\omega^{-1}$. 
The degenerate principal series $I(s)=\Ind_P^G(|\omega |^s)$ is naturally embedded in the principal series
  $\Ind_{P_0}^G(\chi_s)$ where $\chi_s$ is a character of $M_0$.  Just mentioned properties of $w_0$ imply that $w_0(\chi_s)=\chi_{-s}$. 
  If $E(s)$ is the Eisenstein series built from a holomorphic section $f(s)$, 
  its constant term along $N_0$ is naturally a function on $M_0$. As such, it is a sum 
  \[ 
 \sum_{w\in W(M)}  d_w(s)w(\chi_s) 
 \] 
 where $d_w(s)$ are meromorphic functions that depend on $f(s)$.  We look at the summand corresponding to $w_0$. 
 Assume firstly that $G$ is split.  Let $f_v(s)$ be the normalized spherical vector in 
 the local principal series representations  $I_v(s)$. If $s>0$ then $I_v(s)$ is generated by $f_v(s)$. Let $E(s)$ be the Eisenstein series 
 corresponding to $f(s)= \otimes_v f_v(s)$.  The contribution of $w_0$ to the constant term is the restriction to $M_0$ of 
 $A(s)(f(s)) = c(s) f(-s)$, where, by Lemma  \ref{L:local_c} 
 \[ 
c(s) =\prod_{i=0}^{r-1} \frac{\zeta(s- i d/2)}{\zeta(s +i d/2 +1)}. 
 \] 
 Here $\zeta(s)$ is the global Dedekind $\zeta$-function corresponding to the the number field $k$. It is well known that $\zeta(s)$ has a simple pole at $s=1$, hence 
  $d_{w_0}(s)=c(s)$ has simple poles at the points of interest. We now look at the case of anisotropic $D$. 
  
  \begin{lem}  For almost all places $v$, the quadratic space $D_v$ is split.
  \end{lem}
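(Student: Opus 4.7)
The plan is to reduce the statement to the classification of quadratic forms over finite fields via the theory of good reduction. First I would fix a diagonalization $q=\sum_{i=1}^d a_i x_i^2$ of the quadratic form on $D$ over $k$, and let $S$ be the finite set of places of $k$ consisting of all archimedean places, all places of residue characteristic $2$, all places at which some $a_i$ fails to be a unit, and all places at which a chosen square root $c\in k^\times$ of the discriminant $\Delta=(-1)^{d/2}a_1\cdots a_d$ (which exists because $\chi_D=1$) fails to be a unit. The claim to establish is that $D_v$ is split for every $v\notin S$.

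Next, for such $v$, I would observe that the Gram matrix of $q$ in the chosen basis lies in $\mathrm{GL}_d(O_v)$, so $D$ has an $O_v$-integral model whose reduction modulo the maximal ideal is a non-degenerate quadratic form $\bar q_v$ of dimension $d$ over the residue field $\mathbb F_{q_v}$, of odd characteristic, whose discriminant is the square of the unit $\bar c$. I would then invoke the classical structure theorem for quadratic forms over a finite field of odd characteristic: the isometry class is determined by the dimension together with the discriminant modulo squares, and in each even dimension the class with square discriminant is the split (hyperbolic) form. Hence $\bar q_v$ is split.

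Finally, I would use Hensel's lemma to lift the splitting from the residue field: an isotropic vector of $\bar q_v$ lifts to an isotropic vector of $D_v$, producing a hyperbolic plane over $O_v$ inside $D_v$, and its orthogonal complement is again a non-degenerate quadratic space over $O_v$ with square discriminant in even dimension $d-2$. Iterating $d/2$ times exhibits $D_v$ as an orthogonal sum of hyperbolic planes, i.e.\ split. I do not anticipate a serious obstacle; the only non-routine input is the classification over finite fields, and the main point to verify carefully is that the excluded set $S$ is finite, which is immediate because each of the finitely many elements $a_1,\dots,a_d,c$ of $k^\times$ has only finitely many bad primes.
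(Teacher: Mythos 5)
Your proof is correct, but it takes a genuinely different route from the paper's. You argue by good reduction: after discarding the finitely many places where the archimedean/residue-characteristic-$2$ issues arise or where the diagonal coefficients $a_i$ or a global square root $c$ of $\Delta$ fail to be units (the existence of $c$ being exactly the hypothesis $\chi_D=1$), you reduce the form modulo the maximal ideal, invoke the classification of nondegenerate quadratic forms over a finite field of odd characteristic (even dimension together with square signed discriminant forces the hyperbolic class), and lift hyperbolic planes back over $O_v$ by Hensel's lemma. The paper instead works with the local classification of quadratic forms by invariants: since $\operatorname{disc}(D_v)$ is trivial, $D_v$ is either split or has a four-dimensional anisotropic kernel given by a quaternion algebra, and which case occurs is detected by the Clifford algebra of $D_v$; this is the localization of the Clifford algebra of $D$, a central simple algebra over the global field $k$, hence a matrix algebra at almost all places. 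The paper's argument is shorter and more conceptual, outsourcing the work to the standard fact that a central simple algebra over a global field is unramified almost everywhere; yours is more elementary and self-contained, at the cost of fixing an explicit bad set $S$ and carrying out the finite-field classification and the Hensel lift (where one should note, as is routine, that a nonzero isotropic vector of a nondegenerate form in odd characteristic is a smooth point of the quadric, so the lift is legitimate). Both arguments use the hypothesis $\chi_D=1$ in an essential way, and both are valid.
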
 
  \begin{proof} 
   Assume that $v$ is a $p$-adic place. Since the discriminant of $D_v$ is trivial,  $D_v$ is either split or it has a 4 dimensional anisotropic kernel isomorphic to 
   a quaternion algebra. The
  isomorphism class is determined by the isomorphism class of the Clifford algebra attached to $D_v$. But this algebra is a localization of the global Clifford algebra attached to 
  $D$. The Clifford algebra is a central simple algebra, it localizes to a matrix algebra for almost all places. This proves the lemma. 
  \end{proof} 
  
  \smallskip 
  So let $S$ be the finite set of places such that $D_v$ is split for $v\notin S$. We consider the Eisenstein series $E(s)$ corresponding to the constant 
  section $f(s)= \otimes_v f_v(s)$ where, for all $v\not\in S$, $f_v(s)$ is the spherical vector, while for $v\in S$, $f_v(s)$ is arbitrary. Then the contribution of $w_0$ 
  to the constant term is again given by $A(s) f(s)$. Since we know how to compute the action of the intertwining operator on the spherical vector 
  at the places $v\notin S$, we have 
  \[ 
  A(s) f(s) = c(s) (\otimes_{v\in S} c_v(s)^{-1} A_v(s) f_v(s)) \otimes (\otimes_{v\notin S} f_v(-s)). 
  \] 
  Since $c_v(s)^{-1}$ are non-zero by Corollary \ref{C:non-vanishing_split} and the local intertwining operators are always non-zero, we see $d_{w_0}(s)$ has poles at the points of interest.  
  In fact, since the holomorphic properties of $A_v(s)$ reflect the Jantzen filtration, we see that the pole is achieved for $f_v$ in any $K$-type belonging 
  to the co-socle of $I_v(s)$.

  \smallskip 
  It remains to show that the pole, at the point $s_0$, of the $w_0$-summand in the constant term is not cancelled out by a pole of the $w$-summand for some other $w\in W(M)$. 
  The cancellation can happen only if 
  \[ 
  w(\chi_{s_0})= w_0(\chi_{s_0}). 
  \] 
  Thus we need to show that there is no such $w\in W(M)$. This is an easy check left to the reader in the case of split groups, but we provide details if 
  $D$ is totally anisotropic. In this case $\Phi$ is of the type $C_r$ and $\Phi_M$ of the type $A_{r-1}$.   Any real character of $M_0$ is determined 
   by the restriction to the maximal split torus $T_r$.  Recall that any element in $T_r$  
  is uniquely written as a product of $\omega_i(t_i)$ where $\omega_i^{\vee}$ are the co-characters defined by the equation (\ref{E:co-character}). 
  Thus any real character $\chi$ of $T_r$ is determined 
  by an $r$-tuple $(s_1, \ldots, s_r)$ of real numbers defined by $\chi(\omega_i)= |\cdot |^{s_i}.$ In these coordinates the modular character is   
\[
\rho=(1+ (r-1)d, \ldots , 1 +d,1). 
\] 
Note that the difference between the consecutive entires is $d$, which reflects the fact that short root spaces are $d$-dimensional. In order to compute 
$\chi_s$ we observe that the (group) root spaces corresponding to $\pm\alpha$ where $\alpha$ is a short simple root (i.e.~a simple root of $M$) 
generate a group isomorphic to $\Spin(H\oplus D)$ where 
$H$ is a 2-dimensional hyperbolic plane. The degenerate principal series for this group, with respect to the maximal parabolic subgroup whose unipotent radical 
is the root space of $\alpha$, contains the trivial representation as a submodule for $s=-d$. It follows that $s_{i}-s_{i+1}=-d$ for the coordinates of $\chi_s$. These 
equations pin down a line, and the linear parameter $s$ is fixed by demanding that $w_0(\chi_s)=\chi_{-s}$ and $\chi_{s_0}=-\rho$ for $s_0=-1-(r-1)d/2$. (At this point 
both series of representations contain the trivial representation as a submodule.) Putting everything together yields 
\[ 
\chi_s = (s,s,\ldots , s) + \frac{d}{2}(1-r, 3-r, \ldots , r-1). 
\] 
We claim that $\chi_{s_0}$ is regular at the reducibility points.  
To that end, recall that a character $\chi=(s_1, \ldots, s_r)$  is singular if it is contained in a wall 
  $s_i=0$, $s_i-s_j=$ or $s_i+s_j=0$.   Since the coordinates of $\chi_{s_0}$ form a strictly increasing sequence of odd integers, it is clear that $\chi_{s_0}$ cannot 
  satisfy the first two equations. Since $d$ is divisible by 4, the coordinates of $\chi_{s_0}$ are congruent modulo 4. The equation $s_i+s_j=0$ implies that $s_i$ and 
  $s_j$ are opposite integers. But two opposite odd integers are never congruent modulo 4,  hence $s_i+s_j=0$ cannot hold.  

\end{proof} 

If  $k=\mathbb Q$, and $J=J_3(\mathbb O)$ where $\mathbb O$ is the Cayley-Graves octonion algebra,
 then the residual representation at $s=5$ and $s=1$ contains the singular modular 
 form on the exceptional tube domain of the weight 4 and 8, respectively, discovered by Kim \cite{Kim_octonions}. 

\section{Acknowledgments} 
The first named author was supported in part by a Croatian Science Foundation grant no. 9364.
The second named author was supported in part by an NSF grant DMS-1359774.

\bibliographystyle{siam}
\bibliography{deg_eisen_Sp4_27_07}
\bigskip
\end{document}